\newcommand*{\sectionbookmark}[1][]{%
  \bookmark[%
    level=section,%
    dest=\@currentHref,%
    #1%
  ]%
}
 \newtheorem{thm}{Theorem}[section]
 \newtheorem{cor}[thm]{Corollary}
 \newtheorem{lem}[thm]{Lemma}
 \theoremstyle{definition}
 \newtheorem{defn}[thm]{Definition}
 \theoremstyle{remark}
 \newtheorem{rem}[thm]{Remark}
 \numberwithin{equation}{section}
\newcommand{\N}{\mathbb{N}}
\newcommand{\R}{{\rm I\!R}}
\newcommand{\Om}{\Omega}
\newcommand{\ric}{\mathrm{Ric}}
\newcommand{\n}{\mathbf{n}}
\newcommand{\dist}{\mathrm{dist}}
\newcommand{\vol}{\mathrm{Vol}}
\newcommand{\diam}{\mathrm{diam}}
\begin{document}

%-------------------------------------------------------------------------
% editorial commands: to be inserted by the editorial office
%
%\firstpage{1} \volume{228} \Copyrightyear{2004} \DOI{003-0001}
%
%
%\seriesextra{Just an add-on}
%\seriesextraline{This is the Concrete Title of this Book\br H.E. R and S.T.C. W, Eds.}
%
% for journals:
%
%\firstpage{1}
%\issuenumber{1}
%\Volumeandyear{1 (2004)}
%\Copyrightyear{2004}
%\DOI{003-xxxx-y}
%\Signet
%\commby{inhouse}
%\submitted{March 14, 2003}
%\received{March 16, 2000}
%\revised{June 1, 2000}
%\accepted{July 22, 2000}
%
%
%
%---------------------------------------------------------------------------
%Insert here the title, affiliations and abstract:
%
\title[
 Eigenvalues of the Wentzel-Laplace operator]{\begin{center}Isoperimetric bounds for Wentzel-Laplace eigenvalues on Riemannian manifolds
\end{center}}

%----------Author 1
\author[A. Ndiaye]{\begin{center}
A\"issatou M. NDIAYE
\end{center} }

\address{%
 Institut de math\'ematiques\\
			Université de Neuchâtel\\
			   Switzerland \\
		     Tel.: +41327182800\\
     %       http://annuaire.unine.ch/#/person/35565
         }
           
 \email{aissatou.ndiaye@unine.ch}

\thanks{\ldots}
%
%----------classification, keywords, date
%\subjclass{Primary \ldots; Secondary \ldots}

%\keywords{Class file, journal}

\date{\today}
%----------additions
% \dedicatory{To my boss}
%%% ----------------------------------------------------------------------

\begin{abstract}
In this paper, we investigate eigenvalues of the Wentzel-Lapla\\ce
operator on a bounded domain in some Riemannian manifold. We prove
asymptotically optimal estimates, according to the Weyl's law through bounds
that are given in terms of the isoperimetric ratio of the domain. Our
results show that the isoperimetric ratio allows to control the entire
spectrum of the Wentzel-Laplace operator in various ambient spaces.
\end{abstract}

%%% ----------------------------------------------------------------------
\maketitle
%%% ----------------------------------------------------------------------
%\tableofcontents

\section{Introduction}
Let $n\geqslant 2$ and $(M,g)$ be an $n$-dimensional Riemannian manifold. Let $\Omega\subset M$ be a bounded domain with smooth boundary $\Gamma$.
We denote by $\Delta$ and  $\Delta_\Gamma$ the Laplace-Beltrami operators  acting on functions  on $M$ and $\Gamma$, respectively. Notice that, in conformance with conventions
in computational geometry, we define the Laplacian with negative
sign, that is the negative divergence of the gradient operator. The gradient operators on $M$ and $\Gamma$ will be denoted by $\nabla$ and $\nabla_\Gamma$ respectively, the outer normal derivative on $\Gamma$ by $\partial_\n$.  Throughout the paper we denote by $\mathrm{d}_M$ and $\mathrm{d}_\Gamma$ the Riemannian volume elements of $M$ and $\Gamma$.

Given  an arbitrary constant $\beta\in\R$, consider the following eigenvalue problem on $\Omega$:
\begin{equation}\label{w}
\begin{cases}
\Delta u=0 \quad \text{in}~\Omega,\\
\beta \Delta_\Gamma u +\partial_\n u=\lambda u  \quad \text{on}~\Gamma.\\
\end{cases}\quad\text{(Wentzel Problem)}
\end{equation}

 In what follows, we will assume that $\beta$, which we refer to as the boundary parameter, is non-negative. In this case, the Wentzel eigenvalues form a discrete sequence that can be arranged as 
 \begin{equation}\label{spectrum}
 0=\lambda_{W,0}^{\beta}< \lambda_{W,1}^{\beta}\leqslant\lambda_{W,2}^{\beta}\leqslant \cdots\leqslant \lambda_{W,k}^{\beta}\leqslant \cdots \nearrow \infty.
 \end{equation}
We adopt the convention that each eigenvalue is repeated according to its multiplicity. 

The boundary condition in \eqref{spectrum}, which we call Wentzel boundary condition, was  initially introduced in \cite{Wentzel}, in order to find the most general boundary conditions for which the associated operator generates a Markovian semigroup. It is often considered in a more general form  cf.\cite[(1.2)]{Favini2002},\cite[(2.32)]{Gal2015}. Or  sometimes, it subordinates the heat equation  as in \cite[(1.3)]{Kennedy2010} see also \cite{Favini2002}.
A good discussion on motivations and the physical interpretation of Wentzel boundary conditions can be found in \cite{goldstein}. 

The present paper we use valuable tools to find bounds
in terms of geometric quantities in order to estimate Wentzel eigenvalues.
These bounds are optimal according to the asymptotic behaviour of the eigenvalues given by the Weyl law \eqref{WeylW}. 
%%%%%%%%%%%%%%%%%

The eigenvalue problem of the Laplacian with Wentzel boundary
condition has only recently been significantly investigated.
When $\beta=0$, the eigenvalue problem \eqref{w} reduced to the so called Steklov eigenvalue problem.
% There has been much recent work on the Steklov eigenvalue
%problem. See for example\cite{provstub,xiong2017,CGH2018,xia2019escobars}.
% Numerous of these recent results are under new research approach that are of direct interest. 
An advanced reference providing an overwiew on the Steklov problem, is \cite{GP}. As in \cite{Gal2015}, the problem \eqref{w} can be viewed as a perturbed ( as opposite to unperturbed when $\beta=0$) Steklov problem.
%%%%%%%%%%%%%%%%%%
%%Throughout the the rest of this section, $(M, g)$
%% will designate an $n$-dimensional compact Riemannian manifold with boundary $\Gamma$.

The most relevant works on bounds for eigenvalues of the Wentzel-Laplace operator  have been done in
\cite{DKL,Feng1,Feng2}.
Dambrine, Kateb and Lamboley \cite{DKL} obtained a first upper bound for the first non-trivial eigenvalue $\lambda_{W,1}^{\beta}$ in terms of purely geometric quantities if $\Omega$ is a bounded domain in $\R^n$:\\
 Let $\wedge(\Omega)$ denote the spectral radius  of the matrix  $$P(\Omega) \stackrel{\scriptscriptstyle\text{def}}= \left(\int_\Gamma \delta_{ij}-\n_i\n_j  \mathrm{d}_\Gamma\right)_{i,j=1,\ldots,n}.$$ The following inequality holds:
\begin{equation}\label{dkl}
 \lambda_{W,1}^{\beta}\leqslant \frac{\vol(\Omega)+\beta\wedge(\Omega)}{\omega_n^{-\frac{1}{n}}\vol(\Omega)^{\frac{n+1}{n}}\left[1+c_n\left(\frac{\vol(\Omega)\Delta B}{\vol(B)}\right)^2 \right]},\quad c_n:=\frac{(\sqrt[n]{2}-1)(n+1)}{4n}.
\end{equation}
Here, $B$ is the ball having the same volume as $M$  and with the same center of mass than $\Gamma$ and $\omega_{n}$ denotes the volume of the $n$-dimensional Euclidean unit ball. Equality holds in \eqref{dkl} if $M$ is a ball. In \cite{Feng1}, Wang and Xia proved the following bound for the same eigenvalue:
\begin{equation}
 \lambda_{W,1}^{\beta}\leqslant \frac{n\vol(\Omega)+\beta(n-1)\vol(\Gamma)}{n\vol(\Omega)(\vol(\Omega)\omega_n^{-1})^{\frac{1}{n}}}.
\end{equation} 
They also present a bound for $\lambda_{W,1}^{\beta}$ in non-Euclidean case, when the Ricci curvature of $M$ and the principle curvatures of $\Gamma$ are bounded.
Going further, Du-Wang-Xia provided  the following isoperimetric bound for the first $n$($n$ being the dimension) eigenvalues, when 
$M$ is immersed in an Euclidean space $\R^N$ equipped with the canonical Euclidean metric. If $H$ is the mean curvature vector field of $\Gamma$ in $\R^N$ then one has
\begin{equation}\label{dxw}
\frac{1}{n-1}\Sigma_{j=1}^{n-1}\lambda_{W,j}^{\beta}\leqslant\frac{\sqrt{\left[ n\vol(M)+(n-1)\beta\vol(\Gamma)\right]\int_\Gamma  |H|^2 \mathrm{d}_\Gamma}}{\vol(\Gamma)}.
\end{equation}
When $N = n$, that is, $M$ is a bounded domain of $\R^N$, then equality holds in \eqref{dxw} if and only if $M$ is a ball.

%In the same vein
The aim of this work is to go even further and provide  uniformal isoperimetric bounds for all the  eigenvalues of \eqref{w}. If $\Omega$ is a domain of an $n$-dimensional complete Riemannian manifold $(M,g)$, with boundary $\Gamma$, the isoperimetric ratio of $\Omega$ is defined by  $I(\Omega):=\frac{\vol (\Gamma)}{\vol(\Omega)^\frac{n-1}{n}}$. In the numerator $\vol$ stands for the $(n-1)$-Riemannian volume and for the $n$-Riemannian volume from $g$ in the denominator. 

Our first result provides an upper bound in the case of Euclidean domains. We respectively denote $\omega_n$ and $\rho_{n-1}=n\omega_n$  the volumes of the  unit ball and the unit sphere in the $n$-dimensional Euclidean space.
%In our main result, we obtain bounds on \textcolor{red}{\ldots}.
\begin{thm}\label{eucl}
Let $n\geqslant 3$ and $\Omega\subset\R^n$ be a bounded euclidean domain with smooth boundary $\Gamma$. 
Then, for every $k\geqslant 1$, one has
\begin{multline}
\lambda_{W,k}^{\beta}(\Om)
\leqslant  \zeta_1(n)  \left(\frac{\vol(\Omega)}{\vol(\Gamma)}\right)^{1-\frac{2}{n}} \left(\frac{k}{\vol(\Gamma)}\right)^{\frac{2}{n}}\\
+ \zeta_2(n) I(\Omega)^{1+\frac{2}{n-1}} \left[ \frac{\vol(\Omega)}{\vol(\Gamma)}+\beta\right]\left(\frac{k}{\vol(\Gamma)}\right)^\frac{2}{n-1},
\end{multline}
where $\zeta_1(n):= 2^{10(n+1)}\omega_n^{\frac{2}{n}}$ and
 $\zeta_2(n):=\frac{ 2^{10(n+3)}}{n}\omega_n^\frac{1}{n}
$.
\end{thm}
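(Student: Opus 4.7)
The plan is to exhibit a $(k+1)$-dimensional test subspace in the variational characterisation
\begin{equation*}
\lambda_{W,k}^\beta = \min_{\substack{V\subset H^1(\Omega)\\ \dim V=k+1}}\ \max_{u\in V\setminus\{0\}}\ \frac{\int_\Omega|\nabla u|^2\,\mathrm{d}_M + \beta\int_\Gamma|\nabla_\Gamma u|^2\,\mathrm{d}_\Gamma}{\int_\Gamma u^2\,\mathrm{d}_\Gamma}
\end{equation*}
of the form $\mathrm{span}(1,\phi_1,\ldots,\phi_k)$, where the $\phi_i$ are cutoff-type test functions whose supports are pairwise disjoint inside $\Omega$. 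Disjointness of supports decouples all three quadratic forms above on linear combinations $u=\sum_i c_i\phi_i$, so that the maximum over the span reduces to $\max_i R(\phi_i)$ of individual Rayleigh quotients, and it suffices to bound $R(\phi_i)$ for each $i$ separately.

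To produce the $\phi_i$ I would apply a metric--measure covering argument of Grigor'yan--Netrusov--Yau / Hassannezhad type to a suitable structure on $\Omega$: the Euclidean metric together with a reference measure that blends the $n$-dim Lebesgue volume on $\Omega$ with the $(n-1)$-dim surface measure on $\Gamma$. The output is a family of $k$ pairwise disjoint ``annular capacitors'' $A_i\subset A_i^*\subset\Omega$ with $\vol(A_i\cap\Gamma)\gtrsim \vol(\Gamma)/k$ and with two independent controls on the annular gap widths: one of order $(\vol(\Omega)/k)^{1/n}$ governing the bulk geometry, the other of order $(\vol(\Gamma)/k)^{1/(n-1)}$ governing the boundary. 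Taking $\phi_i$ to be the natural Lipschitz function equal to $1$ on $A_i$ and $0$ outside $A_i^*$, linear in the Euclidean distance in between, one obtains the three estimates
\begin{equation*}
\int_\Omega|\nabla\phi_i|^2 \lesssim \frac{\vol(A_i^*)}{(\mathrm{gap}_{\mathrm{bulk}})^2},\qquad \int_\Gamma|\nabla_\Gamma\phi_i|^2 \lesssim \frac{\vol(A_i^*\cap\Gamma)}{(\mathrm{gap}_{\mathrm{bdry}})^2},\qquad \int_\Gamma\phi_i^2 \gtrsim \frac{\vol(\Gamma)}{k}.
\end{equation*}
Substituting the two gap widths and using the identity $\vol(\Gamma)=I(\Omega)\vol(\Omega)^{(n-1)/n}$ to rewrite the prefactors produces precisely the two summands of the theorem, with exponents $2/n$ (bulk) and $2/(n-1)$ (boundary/$\beta$); the explicit constants $\zeta_1(n), \zeta_2(n)$ simply collect the doubling/packing constants of the covering lemma together with powers of $\omega_n$.

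The crux is the first step: the covering argument must be tuned so that the resulting annular regions simultaneously satisfy a lower bound on their $(n-1)$-dim boundary trace (to bound the denominator from below) and an upper bound on their $n$-dim ambient volume at the correct scale (to bound the numerator from above). This \emph{double-scale} packing is exactly what forces the final estimate to split as a sum of two terms rather than a single Colbois--El Soufi--Girouard type capacitor estimate, and balancing the bulk and boundary scales against each other---while still keeping the supports disjoint in all of $\Omega$---is where the main technical work of the proof lies.
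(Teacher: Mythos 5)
Your high-level framework (min--max, disjointly supported cut-offs, bounding individual Rayleigh quotients) is right, and you correctly identify that the hard part is obtaining capacitors that simultaneously control both the $(n-1)$-dimensional boundary trace and the $n$-dimensional bulk. But the implementation you sketch --- a single Grigor'yan--Netrusov--Yau/Hassannezhad-type covering producing annuli ``with two independent controls on the annular gap widths'' --- is not realizable as stated: a Lipschitz cut-off supported on an annular region has \emph{one} gap width, and the corresponding $1/(\text{gap})^2$ factor multiplies \emph{both} the bulk and boundary Dirichlet integrals. In particular, if you pick the bulk scale $(\vol(\Omega)/k)^{1/n}$, the boundary term $\beta\int_\Gamma|\nabla_\Gamma\phi|^2$ would come out with a $k^{2/n}$ exponent, not the $k^{2/(n-1)}$ the theorem requires, and the extra factor $I(\Omega)^{1+\frac{2}{n-1}}$ in the second summand does not appear in your heuristic computation at all.

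What the paper actually does is different and resolves this tension through a \emph{dichotomy}, not a unified covering. A single radius $r\sim\vol(\Omega)^{1/n}k^{-1/(n-1)}$ (essentially $r_{4k}$ in the paper's notation) is fixed, and $4k$ disjoint balls of decreasing $\Gamma$-measure are selected. Then either (a) the last ball still has large boundary measure, in which case the balls themselves serve as capacitors and H\"older's inequality $\int_{\Omega\cap B_j}|\nabla f_j|^2\leqslant (\vol(B_j)/r^n)^{2/n}\vol(\Omega\cap B_j)^{1-2/n}$ turns the bulk contribution into the $k^{2/n}$ term with no $1/r^2$; or (b) the remaining measure is spread out, in which case the Colbois--Maerten covering lemma supplies capacitors in the depleted region and both numerator integrals carry $1/r^2$, producing the $k^{2/(n-1)}$ term with the $I(\Omega)^{1+\frac{2}{n-1}}$ factor. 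Crucially, step (b) requires Lemma~\ref{entrop1}, an isoperimetric estimate showing that removing $4k$ balls of radius $4r$ from $\Omega$ still leaves a definite amount of boundary measure ($\vol(\Gamma_0)\gtrsim I_0(\Omega)\vol(\Omega)^{(n-1)/n}$); this is the geometric input that makes the denominator lower bound work, and it is absent from your sketch. Without the case split and the ``holed-domain'' isoperimetric lemma, your plan cannot produce the stated two-term estimate.
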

%This leads to the following corollary:
\begin{cor}\label{eucl20052020}
Let $\Omega\subset\R^n$ be a bounded euclidean domain of dimension $n\geqslant 3$ with smooth boundary $\Gamma$. 
Then, for every $k\geqslant 1$, we have
\begin{equation}
\lambda_{W,k}^{\beta}(\Om)
\leqslant C_1(\Omega,\beta)+C_2(\Omega,\beta)\left(\frac{k}{\vol(\Gamma)}\right)^\frac{2}{n-1}.
\end{equation}
Here $ C_1(\Omega,\beta)$ and $ C_2(\Omega,\beta)$ are geometric constants given by :
%\begin{equation*}
%\begin{cases} 
%C_2(\Omega,\beta)=1+ \zeta_2(n) I(\Omega)^{1+\frac{2}{n-1}} \left[ \frac{\vol(\Omega)}{\vol(\Gamma)}+\beta\right]\\
%\text{and }\\
% C_1(\Omega,\beta)=\zeta_1^n(n) \left(\frac{\vol(\Omega)}{\vol(\Gamma)}\right)^{n-2}C_2(\Omega,\beta).
%\end{cases}
%\end{equation*}
\begin{align*}
 C_2(\Omega,\beta)&= \zeta_2(n) I(\Omega)^{1+\frac{2}{n-1}} \left[ \frac{\vol(\Omega)}{\vol(\Gamma)}+\beta\right]+1 \\
C_1(\Omega,\beta)&=\zeta_1^n(n) \left(\frac{\vol(\Omega)}{\vol(\Gamma)}\right)^{n-2}C_2(\Omega,\beta).
\end{align*}
 The constants $\zeta_1(n)$ and
 $\zeta_2(n)$ are the same as in Theorem \ref{eucl}.
\end{cor}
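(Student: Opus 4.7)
The plan is to derive Corollary \ref{eucl20052020} as a direct consequence of Theorem \ref{eucl} by reconciling the two different powers of $k/\vol(\Gamma)$ that appear on the right-hand side. The Theorem provides a bound of the form $A \cdot a^{2/n} + B \cdot a^{2/(n-1)}$, where $a = k/\vol(\Gamma)$ and $A,B$ are explicit geometric quantities, while the Corollary requires a bound of the form $C_1 + C_2 a^{2/(n-1)}$. Since $2/n < 2/(n-1)$, the smaller exponent can be exchanged for the larger one at the cost of a purely geometric additive constant through Young's inequality.

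More concretely, I would apply Young's inequality $xy \leqslant x^p/p + y^q/q$ with conjugate exponents $p = n$ and $q = n/(n-1)$, choosing
\[
x = \zeta_1(n) \left(\frac{\vol(\Omega)}{\vol(\Gamma)}\right)^{1-\frac{2}{n}}, \qquad y = a^{2/n}.
\]
Since $y^q = a^{2/(n-1)}$ and $x^p = \zeta_1^n(n)\bigl(\vol(\Omega)/\vol(\Gamma)\bigr)^{n-2}$, the first term in Theorem \ref{eucl} is controlled by
\[
\frac{1}{n}\,\zeta_1^n(n)\left(\frac{\vol(\Omega)}{\vol(\Gamma)}\right)^{n-2} + \frac{n-1}{n}\,a^{2/(n-1)}.
\]

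Combining this with the second term already present in Theorem \ref{eucl}, the coefficient in front of $a^{2/(n-1)}$ becomes
\[
\frac{n-1}{n} + \zeta_2(n) I(\Omega)^{1+\frac{2}{n-1}}\left[\frac{\vol(\Omega)}{\vol(\Gamma)}+\beta\right],
\]
which is bounded above by $C_2(\Omega,\beta)$ since $(n-1)/n \leqslant 1$. The remaining purely geometric term is $\frac{1}{n}\zeta_1^n(n)\bigl(\vol(\Omega)/\vol(\Gamma)\bigr)^{n-2}$, and this is dominated by $C_1(\Omega,\beta) = \zeta_1^n(n)\bigl(\vol(\Omega)/\vol(\Gamma)\bigr)^{n-2}\,C_2(\Omega,\beta)$ because $C_2(\Omega,\beta) \geqslant 1 \geqslant 1/n$. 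Putting the two estimates together yields the stated inequality.

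There is no genuine obstacle here: Theorem \ref{eucl} does all the analytic work, and the corollary is purely a rewriting exercise. The only point requiring minor care is the bookkeeping of the numerical constants to confirm that the extra $+1$ in the definition of $C_2(\Omega,\beta)$ is exactly what is needed to absorb the factor $(n-1)/n$ produced by Young's inequality, and that the factor $C_2(\Omega,\beta) \geqslant 1$ in the definition of $C_1(\Omega,\beta)$ suffices to swallow the extra $1/n$ that appears in the additive term.
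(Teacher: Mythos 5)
Your proof is correct, and it takes a genuinely different route from the paper. The paper's own argument factors $\left(\frac{k}{\vol(\Gamma)}\right)^{\frac{2}{n-1}}$ out of the bound in Theorem \ref{eucl} and then splits into two cases according to whether the remaining factor $\zeta_1(n)\left(\frac{\vol(\Omega)}{\vol(\Gamma)}\right)^{1-\frac{2}{n}}\left(\frac{\vol(\Gamma)}{k}\right)^{\frac{2}{n(n-1)}}$ is below or at least $1$: in the first case the whole bracket is absorbed into $C_2(\Omega,\beta)$, and in the second case the inequality $\frac{k}{\vol(\Gamma)}\leqslant\left[\zeta_1(n)\left(\frac{\vol(\Omega)}{\vol(\Gamma)}\right)^{\frac{n-2}{n}}\right]^{\frac{n(n-1)}{2}}$ is used to bound the entire right-hand side of \eqref{ineq20052020} by the additive constant $C_1(\Omega,\beta)$. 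You instead apply Young's inequality with conjugate exponents $(n,\,n/(n-1))$ to the low-power term and never split into cases. The two arguments produce exactly the same constants because the paper's $C_1,C_2$ already carry enough slack (the $+1$ in $C_2$ and the multiplicative $C_2\geqslant 1$ in $C_1$) to absorb the $\frac{n-1}{n}$ and $\frac{1}{n}$ factors from Young; your bookkeeping at the end checks this carefully and correctly. Young's route is more streamlined and conceptually cleaner; the paper's dichotomy has the mild advantage of being self-contained (no external inequality invoked) and of exposing directly for which $k$ the $k^{2/(n-1)}$ term dominates.
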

For bounded domains in Riemannian manifold with Ricci curvature bounded from below, we have an isoperimetric upper bound, which also depends on the infimum isoperimetric ratio that we define as follows:
\begin{defn}
Let $(M,g)$ be a complete Riemannian manifold of dimension $n\leqslant 2$ and $\Omega$ a bounded domain in $M$. The infimum isoperimetric  ratio of $\Omega$ is the quantity $
I_0(\Omega):=\inf\{I(U) : U\text{ open set in }\Omega\}
$. In particular, if $\Om$ is an  Euclidean domain, one has $I_0(\Omega)=I_0(\R^n)=n\omega_n^\frac{1}{n}$. 
\end{defn}

\begin{thm}\label{t16011}
Let $(M,g)$ be a complete Riemannian manifold of dimension $n\geqslant 3$ with non-negative Ricci curvature. Let 
$\Omega\subset M$ be a bounded domain with smooth boundary $\Gamma$. Then for every $k\geqslant 1$, we have
\begin{multline}
\lambda_{W,k}^{\beta}(\Om)
\leqslant  c_1(n)  \left(\frac{\vol(\Omega)}{\vol(\Gamma)}\right)^{1-\frac{2}{n}} \left(\frac{k}{\vol(\Gamma)}\right)^{\frac{2}{n}}\\
+ c_2(n) \left(\frac{I(\Omega)}{I_0(\Omega)} \right)^{1+\frac{2}{n-1}} \left[ \frac{\vol(\Omega)}{\vol(\Gamma)}+\beta\right]\left(\frac{k}{\vol(\Gamma)}\right)^\frac{2}{n-1},
\end{multline}
where $c_1(n):= 2^{10(n+1)}\omega_n^{\frac{2}{n}}$ and
 $c_2(n):= 2^{5(n+5)}\rho_{n-1}^{\frac{2}{n-1}}$.
\end{thm}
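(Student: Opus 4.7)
The plan is to exploit the variational min-max characterization
\[
\lambda_{W,k}^{\beta}(\Om) \;\leqslant\; \max_{0\leqslant i\leqslant k}\;\frac{\int_\Omega |\nabla u_i|^2\,\mathrm{d}_M + \beta\int_\Gamma|\nabla_\Gamma u_i|^2\,\mathrm{d}_\Gamma}{\int_\Gamma u_i^2\,\mathrm{d}_\Gamma},
\]
valid for any family $(u_i)_{i=0}^k$ of Lipschitz functions whose traces on $\Gamma$ have pairwise disjoint supports. The theorem thus reduces to constructing $k+1$ such test functions, each with Rayleigh quotient bounded by the right-hand side of the announced inequality.

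The source of these test functions is a Grigor'yan--Netrusov--Yau (GNY) style covering theorem applied to the metric measure space $(M,d_g,\mathrm{d}_\Gamma)$. Since $\ric \geqslant 0$, Bishop--Gromov yields a global $2^n$-doubling inequality for the ambient measure $\mathrm{d}_M$, and the infimum isoperimetric ratio $I_0(\Omega)$ supplies the missing ingredient needed to transfer such volume estimates to the codimension-one measure $\mathrm{d}_\Gamma$: the bound $\vol(\partial U)\geqslant I_0(\Omega)\vol(U)^{(n-1)/n}$, valid for every subdomain $U\subset\Omega$, compares $\mathrm{d}_\Gamma$-mass of small balls with $\mathrm{d}_M$-mass of small balls up to an explicit $I_0(\Omega)$-distortion. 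Combining these two ingredients, the GNY machinery yields $k+1$ pairs of concentric annuli $(A_i,2A_i)_{i=0}^{k}$ in $M$ such that the outer annuli $2A_i$ are pairwise disjoint, while each inner annulus captures a fraction $\gtrsim 1/k$ of the total boundary mass $\vol(\Gamma)$.

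On each $A_i$ I would take the standard Lipschitz plateau $u_i$: equal to $1$ on $A_i$, vanishing outside $2A_i$, with $|\nabla u_i|\leqslant r_i^{-1}$. The Rayleigh quotient then splits as
\[
\frac{\int_\Omega|\nabla u_i|^2\,\mathrm{d}_M+\beta\int_\Gamma|\nabla_\Gamma u_i|^2\,\mathrm{d}_\Gamma}{\int_\Gamma u_i^2\,\mathrm{d}_\Gamma}\;\leqslant\;\frac{\vol(2A_i\cap\Omega)}{r_i^{2}\vol(A_i\cap\Gamma)}+\beta\,\frac{\vol(2A_i\cap\Gamma)}{r_i^{2}\vol(A_i\cap\Gamma)}.
\]
Bishop--Gromov controls $\vol(2A_i\cap\Omega)\leqslant\omega_n(2r_i)^n$, which combined with the covering lower bound $\vol(A_i\cap\Gamma)\gtrsim\vol(\Gamma)/k$ produces the first summand of the statement, with the same Euclidean-flavoured constant $c_1(n)=\zeta_1(n)$ as in Theorem \ref{eucl}. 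For the boundary term one must also bound $\vol(2A_i\cap\Gamma)$ from above; this is where the isoperimetric comparison enters, injecting the ratio $I(\Omega)/I_0(\Omega)$ raised to the exponent $1+2/(n-1)$. Optimising the radii $r_i$ for each summand separately, or equivalently running two parallel GNY decompositions at the scales $r_i\sim(\vol(\Omega)/k)^{1/n}$ and $r_i\sim(\vol(\Gamma)/k)^{1/(n-1)}$, produces the two terms of the target bound.

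The main obstacle will be the transfer of the GNY covering theorem from the ambient measure $\mathrm{d}_M$, for which doubling is given for free by Bishop--Gromov, to the boundary measure $\mathrm{d}_\Gamma$, which is supported on a codimension-one set and a priori carries no doubling estimate. The workaround is precisely the infimum isoperimetric ratio: by a pigeonhole argument over annular shells one selects radii on which $\mathrm{d}_\Gamma$-mass and $\mathrm{d}_M$-mass are comparable up to a factor of $I(\Omega)/I_0(\Omega)$, and this is exactly why the Riemannian bound differs from the Euclidean Theorem \ref{eucl} by this ratio alone---in $\R^n$, the sharp value $I_0(\R^n)=n\omega_n^{1/n}$ of the Euclidean isoperimetric constant absorbs the distortion into the explicit constants $\zeta_1(n),\zeta_2(n)$ there.
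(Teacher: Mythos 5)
Your proposal has the right high-level flavour --- a GNY/Colbois--Maerten covering argument, Bishop--Gromov for the ambient volume, the isoperimetric ratio to control the boundary measure, and Lipschitz plateau test functions --- but it misses the two key mechanisms the paper actually uses, and at least one step of your plan would not go through as written.

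First, the paper proves this theorem by simply plugging the Bishop--Gromov outputs into the already-established generic Theorem \ref{t1501} (itself built on Theorem \ref{mainanniv}): for $\ric\geqslant 0$ one has $\vol(B(x,r))\leqslant\omega_n r^n$ and $\vol(\partial B(x,r))\leqslant\rho_{n-1}r^{n-1}$ for all $r$, so hypothesis $(H_0)$ holds with $C=2$ and $r_-(\Gamma)=+\infty$, and the $(N,4)$-covering property holds with $N_r=32^n$ by a standard packing argument. No fresh covering argument is needed at this stage. Inside Theorem \ref{mainanniv}, the $k$ test functions are not obtained from two parallel decompositions at two different scales, as you suggest. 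They come from a single Colbois--Maerten decomposition at a single scale $r=\tfrac{1}{64}r_{4k}$, and there is a crucial \emph{dichotomy} you do not anticipate: either $\mu(B(x_K,r))\leqslant\mu(M)/(4KN_r^2)$, in which case the CM capacitor lemma applies on $M_0=M\setminus\bigcup B(x_j,4r)$ and the test functions are tent functions on the capacitors, or $\mu(B(x_K,r))$ is large, in which case the $K$ maximally spread points themselves supply the supports for plateau functions on $B(x_j,2r)$. Your ``run two GNY decompositions at scales $(\vol(\Omega)/k)^{1/n}$ and $(\vol(\Gamma)/k)^{1/(n-1)}$'' does not give a single family of $k+1$ pairwise-disjointly-supported trial functions, which is what the min--max principle requires; you would need one decomposition whose Rayleigh quotients are simultaneously controlled, which is exactly what the case analysis provides.

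Second, you mislocate where $I_0(\Omega)$ enters. It does not enter through an upper bound on $\vol(2A_i\cap\Gamma)$ nor through a pigeonhole over annular shells comparing $\mathrm{d}_\Gamma$-mass to $\mathrm{d}_M$-mass. It enters through Lemma \ref{entrop1}: after deleting $K$ balls of radius $4r$ from $\Gamma$, the residual boundary mass $\vol(\Gamma_0)$ is still bounded below by a constant times $I_0(\Omega)\vol(\Omega)^{(n-1)/n}$, and this lower bound is what makes the denominators $\int_\Gamma\varphi_j^2$ large enough in Case~1. The upper bounds $\vol(\Gamma\cap B_j)\leqslant\vol(\Gamma)/k$ and $\vol(\Omega\cap B_j)\leqslant\vol(\Omega)/k$ follow from plain pigeonhole on disjointness, with no isoperimetry at all. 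So the role of $I_0$ is to guarantee that enough boundary mass survives the removal of the balls, not to transfer a doubling property from $\mathrm{d}_M$ to $\mathrm{d}_\Gamma$. As presented, your argument does not establish the lower bound on the boundary mass captured by each capacitor, which is the pivotal estimate.
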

\begin{rem}
It is not usually simple to gauge this quantity $I_0(\Omega)$. It is not easy to determinate the best constant in the isoperimetric inequality for domains in many  complete Riemmannian manifolds. For example, as we see in Corollary \ref{cor1701}, the longstanding conjecture known as the Cartan-Hadamard conjecture, is about sharp isoperimetric inequalities in complete Riemannian manifolds with negative sectional curvature.
\end{rem}
\begin{thm}\label{t16012}
Let $(M,g)$ be a complete Riemannian manifold of dimension $n\geqslant 3$ with Ricci curvature bounded from below by $-(n-1)\kappa^2$, $\kappa\in\R_{> 0}$. Let 
$\Omega\subset M$ be a bounded domain with smooth boundary $\Gamma$. Then for every $k\geqslant 1$, we have
\begin{equation}\label{ineq22052020}
\lambda_{W,k}^{\beta}(\Om)
\leqslant 
A(\Omega,\beta)+B(\Omega,\beta)\left(\frac{k}{\vol(\tilde{\Gamma)}}\right)^\frac{2}{n-1},
\end{equation}
where 
\begin{align*}
A(\Omega,\beta) &=
\kappa^2  \zeta(n)\left\{ 1+ \left(\kappa\frac{\vol(\Omega)}{\vol(\Gamma)}\right)^{1-\frac{2}{n}}+\left(\frac{I(\Omega)}{I_0(\Omega)} \right)^{1+\frac{2}{n-1}} \left[ \kappa\frac{\vol(\Omega)}{\vol(\Gamma)}+\beta\right]\right\},\\
B(\Omega,\beta) &=\zeta(n)\Bigg\{1+\left(\frac{I(\Omega)}{I_0(\Omega)} \right)^{1+\frac{2}{n-1}} \left[ \kappa\frac{\vol(\Omega)}{\vol(\Gamma)}+\beta\right]\Bigg\}
,
\end{align*}
$\zeta(n)$ being a constant depending only on the  dimension $n$.
\end{thm}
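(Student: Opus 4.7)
The plan is to extend the capacitor/min--max approach used in Theorem~\ref{t16011} by replacing Bishop--Gromov comparison against Euclidean space with comparison against the simply-connected model of constant sectional curvature $-\kappa^2$. By the variational characterization of the Wentzel spectrum, to bound $\lambda_{W,k}^\beta$ it suffices to exhibit $k+1$ sets with pairwise disjoint supports together with associated test functions whose Wentzel Rayleigh quotient
\[
R_W(u)=\frac{\int_\Omega |\nabla u|^2\,\mathrm{d}_M + \beta\int_\Gamma |\nabla_\Gamma u|^2\,\mathrm{d}_\Gamma}{\int_\Gamma u^2\,\mathrm{d}_\Gamma}
\]
is controlled by the right-hand side of \eqref{ineq22052020}. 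These test functions will be produced from a Grigor'yan--Netrusov--Yau style decomposition of a one-sided tubular collar of $\Gamma$ into $k$ annular capacitors.

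The key technical adjustment is the choice of working scale. At scales $r\leqslant 1/\kappa$ the hyperbolic model volume satisfies $V_{-\kappa^2}(r)\leqslant C(n)\,\omega_n r^n$, so the packing estimates and the local Sobolev/Poincar\'e inequalities used in Theorem~\ref{t16011} transfer with dimensional constants only. This gives exactly the same form for the $k$-dependent term after normalising distances by $1/\kappa$: the factor $(I(\Omega)/I_0(\Omega))^{1+2/(n-1)}[\kappa\vol(\Omega)/\vol(\Gamma)+\beta]$ inside $B(\Omega,\beta)$ reflects the isoperimetric cost once the rescaling is absorbed, with the extra $\kappa$ accounting for the change of units.

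The additive term $A(\Omega,\beta)$, carrying the $\kappa^2$ prefactor, arises from the low modes, where the natural capacitor radius would exceed $1/\kappa$: beyond that threshold the model volume grows like $e^{(n-1)\kappa r}$ and the sharp Sobolev constants degenerate. One therefore truncates the radius at $1/\kappa$ and replaces the annular capacitor by a constant cutoff on the complementary set, paying a uniform Rayleigh-quotient contribution of order $\kappa^2$ that absorbs both the small $k$ regime and the error introduced by the truncation. The main obstacle will be stitching the small-scale (Euclidean-like) and large-scale (hyperbolic) regimes so that all capacitor supports remain pairwise disjoint and the dimensional constant $\zeta(n)$ remains independent of $\Omega$ and $\beta$; it is precisely this stitching that forces the splitting into two separate summands $A(\Omega,\beta)$ and $B(\Omega,\beta)$ rather than a single multiplicative bound.
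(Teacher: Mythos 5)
Your plan captures the central idea correctly: Bishop--Gromov comparison only gives dimensional constants at scales $r\lesssim 1/\kappa$, and to avoid an exponential $e^{c(n)\kappa}$ blow-up one must work at that scale. In fact, ``normalising distances by $1/\kappa$'' \emph{is} the paper's method, made precise as a rescaling $\tilde g := \kappa^2 g$: then $\ric_{\tilde g}\geqslant -(n-1)\tilde g$, the hypothesis $(H_0)$ holds with $r_-(\tilde\Gamma)=1$, $\tilde C = e^n$, and $\tilde N_r = 2^{5n}e^{4(n-1)}$, all $\kappa$-free, and one transfers back using the Rayleigh-quotient inequality $\lambda_{W,k}^\beta(\Omega)\leqslant\kappa^2\lambda_{W,k}^\beta(\tilde\Omega)$, valid for $\kappa\geqslant 1$. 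So the $k$-dependent part of your plan is aligned with the paper, though you stop short of making the rescaling and the eigenvalue transfer rigorous.

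The genuine gap is in your treatment of the additive term $A(\Omega,\beta)$. You propose to ``truncate the radius at $1/\kappa$ and replace the annular capacitor by a constant cutoff on the complementary set, paying a uniform Rayleigh-quotient contribution of order $\kappa^2$.'' This is a new construction, not the paper's, and it is not substantiated: a bump of width $1/\kappa$ has $|\nabla u|^2\sim\kappa^2$ pointwise, but the Rayleigh quotient also involves the ratio of $\vol(\mathrm{supp}(\nabla u)\cap\Omega)$ to $\int_\Gamma u^2$, which is not obviously of order one, and you have not explained how the detailed structure of $A(\Omega,\beta)$ --- with its factors $(\kappa\vol(\Omega)/\vol(\Gamma))^{1-2/n}$ and $(I(\Omega)/I_0(\Omega))^{1+2/(n-1)}[\kappa\vol(\Omega)/\vol(\Gamma)+\beta]$ --- would emerge. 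The paper bypasses any new construction for small $k$: Theorem \ref{t1501} already handles $1\leqslant k<K_0$ by the trivial monotonicity $\lambda_{W,k}^\beta\leqslant\lambda_{W,K_0}^\beta$, and then bounds $K_0$ explicitly in terms of $r_-(\Gamma)$, producing the additive constant $C(\Omega,\beta)$ through the elementary inequality $K_0^{2/p}\leqslant\bigl(\vol(\Gamma)/(C\rho_{n-1}r_-(\Gamma)^{n-1})\bigr)^{2/p}+k^{2/p}$. Under the rescaling $\tilde g=\kappa^2 g$ one has $r_-(\tilde\Gamma)=1$ and $\vol(\tilde\Gamma)=\kappa^{n-1}\vol(\Gamma)$, which is exactly how the $\kappa$-powers populate $A(\Omega,\beta)$. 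You should replace the ``constant cutoff'' idea with this monotonicity argument; otherwise the small-$k$ regime and the specific form of $A(\Omega,\beta)$ remain unjustified.
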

Theorems \ref{t16011} and \ref{t16012} emanate from a generic result (Theorem \ref{t1501}) that we prove in Section \ref{section1701}.

Besides the Euclidean case, we have at least one other situation where we know something about the quantity $I_0(\Omega)$.
The so called Cartan-Hadamard conjecture, proved in dimensions $n = 2$ by Weil \cite{Weil1926}, $n = 3$ by Kleiner \cite{Kleiner1992} and $n = 4$ by Croke \cite{Croke1984}, states that any bounded domain in a smooth Cartan-Hadamard manifold of dimension $n$ satisfies
$$I(\Omega)\geqslant C(n)$$
where $C(n)$ is a dimensional constant.
 Very recently,  Ghomi  and  Spruck (2019) in \cite{GhomiSpruck2019} proposed a solution in all dimensions. This leads to the following corollary.
 \begin{cor}\label{cor1701}
Let $(M,g)$ be a smooth Cartan-Hadamard manifold of dimension $n\geqslant 3$  with Ricci curvature bounded from below by $-(n-1)\kappa^2$, $\kappa\in\R_{>0}$ and $\Omega\subset M$ a bounded domain with smooth boundary $\Gamma$. Then for every $k\geqslant 1$, we have
\begin{equation}
\lambda_{W,k}^{\beta}(\Om)
\leqslant 
A(\Omega,\beta)+B(\Omega,\beta)\left(\frac{k}{\vol(\tilde{\Gamma)}}\right)^\frac{2}{n-1},
\end{equation}
where 
\begin{align*}
A(\Omega,\beta) &=
\kappa^2  \zeta(n)\left\{ 1+ \left(\kappa\frac{\vol(\Omega)}{\vol(\Gamma)}\right)^{1-\frac{2}{n}}+I(\Omega)^{1+\frac{2}{n-1}} \left[ \kappa\frac{\vol(\Omega)}{\vol(\Gamma)}+\beta\right]\right\},\\
B(\Omega,\beta) &=\zeta(n)\Bigg\{1+I(\Omega)^{1+\frac{2}{n-1}} \left[ \kappa\frac{\vol(\Omega)}{\vol(\Gamma)}+\beta\right]\Bigg\}
,
\end{align*}
$\zeta(n)$ being a constant depending on the  dimension $n$.
\end{cor}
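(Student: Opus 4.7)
The plan is to deduce Corollary \ref{cor1701} directly from Theorem \ref{t16012} by using the Cartan-Hadamard conjecture to control the infimum isoperimetric ratio $I_0(\Omega)$ from below by a purely dimensional constant, and then absorbing that constant into $\zeta(n)$.

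First I would check that the hypotheses of Theorem \ref{t16012} are met. A Cartan-Hadamard manifold is, by definition, a complete simply connected Riemannian manifold with non-positive sectional curvature, so in particular it is complete. By assumption its Ricci curvature is bounded below by $-(n-1)\kappa^2$, so Theorem \ref{t16012} applies and yields inequality \eqref{ineq22052020} with the constants $A(\Omega,\beta)$ and $B(\Omega,\beta)$ involving the ratio $I(\Omega)/I_0(\Omega)$.

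Next I would invoke the Cartan-Hadamard conjecture, as established in dimensions $2,3,4$ by Weil, Kleiner and Croke and proposed in full generality by Ghomi and Spruck \cite{GhomiSpruck2019}: every bounded domain $U$ with smooth boundary in an $n$-dimensional Cartan-Hadamard manifold satisfies $I(U)\geqslant C(n)$ for a dimensional constant $C(n)>0$, where one can take $C(n)=n\omega_n^{1/n}$, the Euclidean isoperimetric constant. Taking the infimum over open subsets $U\subset\Omega$ yields $I_0(\Omega)\geqslant C(n)$, hence
\begin{equation*}
\frac{I(\Omega)}{I_0(\Omega)}\leqslant \frac{I(\Omega)}{C(n)}.
\end{equation*}

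Finally, I would substitute this into the expressions for $A(\Omega,\beta)$ and $B(\Omega,\beta)$ provided by Theorem \ref{t16012}. The factor $C(n)^{-(1+2/(n-1))}$ depends only on $n$ and can be absorbed into a redefined dimensional constant, still denoted $\zeta(n)$, which yields exactly the statement of Corollary \ref{cor1701}. The main (and only) real issue is to cite the correct form of the Cartan-Hadamard conjecture; once $I_0(\Omega)\geqslant C(n)$ is in hand, the corollary is a purely formal substitution into the generic bound of Theorem \ref{t16012}.
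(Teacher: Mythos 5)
Your proposal is correct and is exactly the route the paper intends: the corollary is obtained from Theorem \ref{t16012} by bounding $I_0(\Omega)$ from below by the dimensional constant $C(n)$ supplied by the Cartan--Hadamard conjecture (Weil, Kleiner, Croke, Ghomi--Spruck), and absorbing $C(n)^{-(1+2/(n-1))}$ into $\zeta(n)$. The paper does not spell this out (it simply says the conjecture ``leads to'' the corollary), and your write-up fills in precisely the intended substitution.
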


%This result shows that it is possible to bound each eigenvalue $\lambda_{W,k}^{\beta}$ by simply controlling the
%volume of $M$.
%The main technical contribution of our paper is

%\paragraph{Methods}
%\paragraph{\textbf{Plan of the paper}}
%\paragraph{Comment}
% One purpose of this paper is to prove ... theorem with the same tools\\
%  Use the same technique as in\\
%   as they share the same technical structure, and as they use the same tool to be solved.\\
%   based on the comparison geometry of principal curvatures of hypersurfaces that are parallel to
%the boundary.\\

\paragraph*{Plan of the paper.} The proofs of Theorems \ref{eucl} \ref{t16011} and \ref{t16012} are presented in Section \ref{sec1701}. We present the proof of Theorem \ref{t1501} in Section 3, following some technical results. We devote Section 2  to briefly summarize properties of the Wentzel-Laplace eigenvalues.

% ----------------------------------------------------------------------

 \section{Wentzel-Laplace operator and functional framework}
 Consider the map  $ \wedge : L^2 (\Gamma) \longrightarrow L^2 (\Omega) $  related to the  Dirichlet problem
\begin{equation}\label{harm}
\begin{cases}
\Delta u=0\quad\text{in } \Omega,\\
u |_\Gamma= f \quad \text{on } \Gamma,
\end{cases}
\end{equation}
which associate to any $ f \in  L^2 (\Gamma)$ its harmonic extension  i.e. the unique function $u$ in $ L^2(\Omega)$ satisfying \eqref{harm}. This map is well defined from $L^2(\Gamma)$ (respectively, $H^{\frac{1}{2}}(\Gamma)$) to $L^2(\Omega)$ (respectively, $H^1(\Omega)$). See \cite[p. 320, Prop 1.7]{Taylor}, for more details. By $H^s(\Omega)$ and $H^s(\Gamma)$, we denote the Sobolev spaces of order $s$ on $\Omega$ and $\Gamma$, respectively, and $u |_\Gamma\in H^\frac{1}{2}(\Gamma)$ stands for the trace of $u\in H^1(\Omega)$ at the boundary $\Gamma$. This will also be denoted simply by $u$, if no ambiguity can result.

Then the Dirichlet-to-Neumann operator is defined by 
\begin{align}
\mathrm{N_ D}:&  H^\frac{1}{2}(\Gamma) \mapsto  H^{-\frac{1}{2}} (\Gamma)\\
 			&f\mapsto \partial_\n (\wedge f)\nonumber.
\end{align}
Again $\partial_\n$  stands for the  normal derivative at the boundary $\Gamma$ of $\Omega$ with unit normal vector $\n$ pointing outwards.

For all $u\in H^{\frac{1}{2}}(\Gamma)$, we define the operators $\mathrm{B}_0= N_D$ (in the operator sense). For  $\beta>0$, we define for all $u\in H^1(\Gamma)$ $\mathrm{C}_\beta u \stackrel{\scriptscriptstyle\text{def}}= \beta\Delta_\Gamma u$ and $\mathrm{B}_\beta \stackrel{\scriptscriptstyle\text{def}}= \mathrm{B}_0+\mathrm{C}_\beta$.
The eigenvalue sequence $\{ \lambda_{W,k}^{\beta}\}_{k=0}^\infty $ given in \eqref{spectrum} can be interpreted as the spectrum associated with the operator $\mathrm{B}_\beta$ and  is subject to the following min-max characterization (see e.g., \cite[Thm1.2]{sandgren} and \cite[(2.33)]{Gal2015}):
\paragraph*{Min-max principle.}
Let $\mathfrak{V}(k) $ denotes  the set of  all $ k$-dimensional  subspaces of   $ \mathfrak{V}_\beta$ which  is defined by
\begin{align*}
 \mathfrak{V}_0&\stackrel{\scriptscriptstyle\text{def}}=\{(u,u_\Gamma)\in H^1(\Omega)\times H^\frac{1}{2}(\Gamma): u_\Gamma=u|_\Gamma \},\\
 \mathfrak{V}_\beta&\stackrel{\scriptscriptstyle\text{def}}=\{(u,u_\Gamma)\in H^1(\Omega)\times H^1(\Gamma): u_\Gamma=u|_\Gamma \}, \quad \beta>0.
\end{align*}
Of course, for all $\beta>0$, we have $ \mathfrak{V}_\beta\subset \mathfrak{V}_0$. For every $k\in\N$, the $k$Th eigenvalue of the Wentzel-Laplace operator $B_\beta$ satisfies

\begin{equation}\label{char}
 \lambda_{W,k}^{\beta}={\underset{V\in \mathfrak{V}(k)}{\min}  }\underset {0\neq u\in V} {\max} R_\beta(u), \quad k\geqslant 0,
\end{equation}
where $R_\beta(u) $,  the Rayleigh quotient for $\mathrm{B}_\beta$, is given by 
\begin{equation}\label{rayleigh}
R_\beta(u) \stackrel{\scriptscriptstyle\text{def}}=\frac{\int_\Om{|\nabla u|^2 \mathrm{d}_M+\beta\int_{\Gamma}{|\nabla_\Gamma u|^2 \mathrm{d}_\Gamma}}}{\int_{\Gamma}{u^2 \mathrm{d}_\Gamma}}, \quad \text{for all } u\in \mathfrak{V}_\beta\backslash\{0\}.
\end{equation}

\paragraph{Asymptotic behaviour.}
The eigenvalues for the Dirichlet-to-Neumann map $B_0=N_ D$ are those of the well known Steklov problem. 
\begin{equation}\label{Steklov}
\begin{cases}
\Delta u=0, & {\rm dans\ }\Omega,\\
\partial_\n u=\lambda^S u, & {\rm sur\ }\Gamma.
\end{cases}
\end{equation}
A good discussion of this problem can be found in \cite{GP}. The Steklov eigenvalues are then  $ \{ \lambda_{W,k}^{0}\}_{k=0}^\infty$
which we shall denote equivalently as $\{ \lambda_{S,k}\}_{k=0}^\infty $.
 They behave according to the following asymptotic formula:
 \begin{equation}
\label{WeylS}
\lambda_{S,k}= C_n k^{\frac{1}{n-1}}+O(k^{\frac{1}{n-1}}),\quad k\rightarrow\infty.
\end{equation}
where  $C_n=\frac{2\pi}{(\omega_{n-1} \vol(\Gamma)^{\frac{1}{n-1}}} $. The reader can refer to \cite[section 4]{sandgren}. For $\beta>0$, the Weyl asymptotic for $ \lambda_{W,k}^{\beta}$  can be deduced  directly from properties of perturbed forms using the   asymptotic behaviour of the spectrum of $C_\beta$ by Hörmander:
\begin{equation}
\lambda_{C_{\beta,k}}=\beta C_n^2k^{\frac{2}{n-1}}+O(k^{\frac{2}{n-1}}),\quad k\rightarrow\infty.
\end{equation}
The Weyl law for eigenvalues on problem \eqref{w} reads
 \begin{equation}
\label{WeylW}
\lambda_{W,k}^{\beta}=\beta C_n^2k^{\frac{2}{n-1}}+O(k^{\frac{2}{n-1}}),\quad k\rightarrow\infty.
\end{equation}
A  complete and detailed discussion  about the spectral properties of the  Wentzel Laplacian can be found in \cite[Section 2]{Gal2015} and for the asymptotic in \eqref{WeylW}, the reader can refer to \cite[Prop 2.7 and  (2.37)]{Gal2015}.
 
\section{General inequality}\label{section1701}
In this section, we establish some needed technical results and the major result in this paper used to prove our main theorems. 
Let $n\geqslant 2$ and $(M,g)$ be an $n$-dimensional Riemannian manifold. Let $\Omega\subset M$ a bounded domain with smooth boundary $\Gamma$.
 Let $r\in\R_{>0}$, we denote by $B(x,r)=\{p\in M, d(x,p)<r\}$ the metric ball of radius $r$ centered at $x\in M$, where $d$ is  the Riemannian distance associated to the metric $g$. 
We assume $\Gamma$ satisfies the following hypothesis:
\begin{enumerate}
\item[$(H_0)$:]\label{H0}
 There exists a radius $r_-(\Gamma)>0$ and a constant $C\in\N_{> 1}$ such that for all $x\in\Gamma$ and $r<r_-(\Gamma)$, one has 
\begin{equation} \label{Vboules}
  \vol(B(x,r))<C \omega_nr^n\quad\text{and}\quad \vol(\partial B(x,r))<C \rho_{n-1}r^{n-1}.
\end{equation} 
\end{enumerate}
Here $\partial B(x,r)$ denotes the geodesic sphere of radius $r$ centered at $x$.

 \begin{lem}\label{entrop}Let $(M,g)$, $\Omega$ and $\Gamma$ be as above. 
For every $K\in\N$, let  $r_K$  be an associated ``maximal" radius defined by
\begin{equation}
r_K:=\left(\frac{\vol(\Omega)}{2}\right)^\frac{1}{n} \left(\frac{I_0(\Omega)}{K C\rho_{n-1}}\right)^\frac{1}{n-1}.
\end{equation} 
Let $\{x_j\}_{j=1}^K$ be an arbitrary set of points  in $\Gamma$.
Then for every  $r>0$ satisfying both $r< \frac{1}{2}r_-(\Omega)$ and $r\leqslant \frac{1}{2} r_K$, one has
\begin{equation}
\vol\left(\Gamma\backslash \bigcup_{j=1}^KB(x_j,2r)\right)>0.
\end{equation}
 \end{lem}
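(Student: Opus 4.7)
The plan is to build an auxiliary open set $\Omega':=\Omega\setminus\bigcup_{j=1}^K \overline{B(x_j,2r)}$ and to compare two estimates of $\vol(\partial\Omega')$: a lower bound coming from the isoperimetric inequality on $\Omega$ applied to $\Omega'$, and an upper bound coming from the local sphere control in $(H_0)$. The radius $r_K$ is tuned precisely so that, when $\vol(\Omega')>\tfrac12\vol(\Omega)$, the isoperimetric lower bound matches exactly the quantity $KC\rho_{n-1}r_K^{n-1}$; room for a strict positivity conclusion comes from the strict bounds in $(H_0)$ together with $KC\geqslant 2$.

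First I would show $\vol(\Omega')>\tfrac12\vol(\Omega)$. Since $2r<r_-(\Gamma)$, hypothesis $(H_0)$ applies at scale $2r$ and gives $\vol(B(x_j,2r))<C\omega_n(2r)^n$, so
$$\vol(\Omega')>\vol(\Omega)-KC\omega_n(2r)^n\geqslant \vol(\Omega)-KC\omega_n r_K^n.$$
Substituting the definition of $r_K$ and using the identity $\rho_{n-1}=n\omega_n$ together with the elementary fact $I_0(\Omega)\leqslant n\omega_n^{1/n}$ (valid on any smooth Riemannian manifold, since $I(U)\to n\omega_n^{1/n}$ as $U$ runs over shrinking geodesic balls), one computes
$$KC\omega_n r_K^n=\frac{\vol(\Omega)}{2}\Bigl(\frac{I_0(\Omega)}{n\omega_n^{1/n}}\Bigr)^{n/(n-1)}(KC)^{-1/(n-1)}\leqslant \frac{\vol(\Omega)}{2}\cdot 2^{-1/(n-1)}<\frac{\vol(\Omega)}{2},$$
using $KC\geqslant 2$. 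Hence $\vol(\Omega')>\tfrac12\vol(\Omega)$.

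Next I apply the defining isoperimetric inequality $\vol(\partial V)\geqslant I_0(\Omega)\vol(V)^{(n-1)/n}$ of $I_0$ to the open set $V=\Omega'\subset\Omega$, which yields
$$\vol(\partial\Omega')>I_0(\Omega)\Bigl(\tfrac{1}{2}\vol(\Omega)\Bigr)^{(n-1)/n}=KC\rho_{n-1}r_K^{n-1},$$
the last identity being immediate from the definition of $r_K$. On the other hand, every boundary point of $\Omega'$ lies either on $\Gamma\setminus\bigcup_j B(x_j,2r)$ or on one of the geodesic spheres $\partial B(x_j,2r)$ intersected with $\overline{\Omega}$, so $(H_0)$ gives
$$\vol(\partial\Omega')<\vol\Bigl(\Gamma\setminus\bigcup_{j=1}^K B(x_j,2r)\Bigr)+KC\rho_{n-1}(2r)^{n-1}\leqslant \vol\Bigl(\Gamma\setminus\bigcup_{j=1}^K B(x_j,2r)\Bigr)+KC\rho_{n-1}r_K^{n-1},$$
since $2r\leqslant r_K$. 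Comparing the two estimates of $\vol(\partial\Omega')$ and cancelling the common term $KC\rho_{n-1}r_K^{n-1}$ yields the desired strict positivity $\vol(\Gamma\setminus\bigcup_{j=1}^K B(x_j,2r))>0$.

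The main obstacle is not any single estimate but keeping the strict inequalities alive through the chain of estimates. The two sources of strictness are the sharp sphere-volume bound in $(H_0)$ and the condition $KC\geqslant 2$ (which ensures $(KC)^{-1/(n-1)}<1$ strictly), and these conspire so that the isoperimetric lower bound on $\vol(\partial\Omega')$ genuinely exceeds the contribution from the $K$ geodesic spheres, forcing some portion of $\Gamma$ to live outside the balls.
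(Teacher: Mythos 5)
Your proof is correct and uses the same essential ingredients as the paper's (decompose $\partial\Omega'$ into $\Gamma_0$ plus sphere pieces, apply the defining isoperimetric inequality of $I_0(\Omega)$ to $\Omega'$, and use $(H_0)$ together with $I_0(\Omega)\leqslant n\omega_n^{1/n}$), but it is organized more cleanly. The paper derives the lower bound
\[
\vol(\Gamma_0) > \bigl[I_0(\Omega)^{\frac{n}{n-1}}\vol(\Omega)-KC\rho_{n-1}^{\frac{n}{n-1}}(2r)^n\bigr]^{\frac{n-1}{n}} - KC\rho_{n-1}(2r)^{n-1}
\]
and then solves the inequality that makes the right-hand side nonnegative, bounding $\bigl((KC)^{-1/(n-1)}+1\bigr)^{-1}\geqslant\tfrac12$; this requires introducing and then discharging the auxiliary assumption $I_0(\Omega)^{n/(n-1)}\vol(\Omega)\geqslant KC\rho_{n-1}^{n/(n-1)}(2r)^n$. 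Your version replaces this chain with a transparent two-step comparison: first $\vol(\Omega')>\tfrac12\vol(\Omega)$, then the observation that $I_0(\Omega)\bigl(\tfrac12\vol(\Omega)\bigr)^{(n-1)/n}=KC\rho_{n-1}r_K^{n-1}$ is exactly the total sphere contribution, so cancellation leaves $\vol(\Gamma_0)>0$. This makes explicit \emph{why} $r_K$ is defined the way it is, which the paper's proof leaves implicit. The tracking of strictness is also sound: the strictness of the sphere and ball bounds in $(H_0)$ does the real work, while $KC\geqslant 2$ only sharpens the intermediate volume bound (even with the non-strict bound $KC\omega_n r_K^n\leqslant\tfrac12\vol(\Omega)$ you would still get $\vol(\Omega')>\tfrac12\vol(\Omega)$ from the strict $(H_0)$ estimate).
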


 \begin{proof}
 %Take an arbitrary set of points $\{x_1,\dots,x_K\}$ in $\Gamma$. 
 We denote by $\Omega_0$ (respectively  $\Gamma_0$) the subset $\Omega\backslash \bigcup_{j=1}^KB(x_j,2r)$ (respectively $\Gamma\backslash \bigcup_{j=1}^KB(x_j,2r)$). One can think of $\overline{\Omega}_0$ as a holed cheese. 
Since the boundary of $\Omega_0$, that we denote by $\partial\Omega_0$, is contained in the union $\Gamma_0\bigcup \left(\bigcup_{j=1}^K\vol(\partial B(x_j, 2r))\right)$, one has
\begin{align*}\nonumber
 \vol(\Gamma_0)&\geqslant \vol(\partial\Omega_0)- \sum_{j=1}^K\vol(\partial B(x_j, 2r))\\
 &= I(\Omega_0)\vol(\Omega_0)^{\frac{n-1}{n}}- \sum_{j=1}^K\vol(\partial B(x_j,2r)),
\end{align*}
where $ \partial B(x_j,2r)=\{p\in M\mid d(x_j,p)=2r\} $.\\
Then, since $2r<r_-(\Gamma)$, one has 
\begin{equation}
 \vol(\Gamma_0)+K C\rho_{n-1}(2r)^{n-1}> I(\Omega_0)\vol(\Omega_0)^{\frac{n-1}{n}}\label{eqd1}.
\end{equation}
Now, we assume that 
 \begin{equation}\label{eqd2}
I_0(\Omega)^\frac{n}{n-1}\vol(\Omega)- K C\rho_{n-1}^\frac{n}{n-1}(2r)^n\geqslant 0.
  \end{equation}
Noticing that $ I_0(\Omega)\leqslant I_0(\R^n)$, we have then
\begin{align*}
 I(\Omega_0)\vol(\Omega_0)^{\frac{n-1}{n}} &> I(\Omega_0)[\vol(\Omega)- K C\omega_n(2r)^n]^\frac{n-1}{n}\\ 
  &\geqslant [I_0(\Omega)^\frac{n}{n-1}\vol(\Omega)- K C\rho_{n-1}^\frac{n}{n-1}(2r)^n]^\frac{n-1}{n}.\\
 \end{align*}
 Replacing in \eqref{eqd1}, this leads to the following inequality:
 \begin{equation}\label{eqd0}
 \vol(\Gamma_0)>[I_0(\Omega)^\frac{n}{n-1}\vol(\Omega)- K C\rho_n^\frac{n}{n-1}(2r)^n]^\frac{n-1}{n}-K C\rho_{n-1}(2r)^{n-1}.
 \end{equation}
The right hand side  is non-negative if 
\begin{equation}\label{eqanniv}
\left(\frac{I_0(\Omega)}{K C\rho_{n-1}}\right)^\frac{n}{n-1}\frac{\vol(\Omega)}{ (K C)^{-\frac{1}{n-1}}+1}\geqslant (2r)^{n}.
\end{equation}
We Notice that $\frac{1}{ (K C)^{-\frac{1}{n-1}}+1}\geqslant \frac{1}{2}$. Inequality \eqref{eqanniv} is then satisfied whenever
 \begin{equation*}
 r \leqslant\frac{1}{2} \left(\frac{I_0(\Omega)}{K C\rho_{n-1}}\right)^\frac{1}{n-1}\left(\frac{\vol\Omega)}{2}\right)^\frac{1}{n}=\frac{1}{2}r_K.
 \end{equation*}
This implies that $\vol(\Gamma_0)>0$, under the assumption in \eqref{eqd2}. 
 However, \eqref{eqd2} can be written as  
 \begin{equation}\label{eq1601}
 r\leqslant\frac{1}{2}\left(\frac{\vol(\Omega)}{KC}\right)^\frac{1}{n}\left( \frac{I_0(\Omega)}{\rho_{n-1}}\right)^\frac{1}{n-1}=\frac{2^{\frac{1}{n}}}{2}(KC)^{\frac{1}{n(n-1)}}r_K. 
 \end{equation}
Since $1\leqslant 2^{\frac{1}{n}}(KC)^{\frac{1}{n(n-1)}}$, \eqref{eq1601} is satisfied by assumption. This ends the proof.
 \qedhere
 \end{proof}

 \begin{lem}\label{entrop1}
  Let the assumptions of Lemma \ref{entrop} be fulfilled.
 We define 
\begin{equation}\label{eqd4}
K_0:=\Biggl\lfloor {\frac{I_0(\Omega)}{C\rho_{n-1}r_-(\Gamma)^{n-1}} \left(\frac{\vol(\Omega)}{2}\right)^{\frac{n-1}{n}}}\Biggr\rfloor+1,
\end{equation} 
 where $\lfloor~\rfloor$ denotes the floor function, so that 
 $r_K <  r_-(\Gamma)$ if $K\geqslant K_0$.
 Let $\{x_j\}_{j=1}^K$ be an arbitrary set of points in $\Gamma$. Then, for every $K\geqslant K_0$ and 
$0<r\leqslant \frac{1}{16} r_k$, we have
\begin{equation}
\vol\left(\Gamma\backslash \bigcup_{j=1}^KB(x_j,2r)\right)> \left(\frac{r}{r_K}\right)^{\frac{n-1}{n}}I_0(\Omega)\vol(\Om)^{\frac{n-1}{n}}.
\end{equation}
\end{lem}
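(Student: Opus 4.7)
The plan is to revisit the proof of Lemma~\ref{entrop} and extract a quantitative lower bound from the same intermediate inequality, instead of merely positivity. The setup is identical: define $\Omega_0 := \Omega \backslash \bigcup_{j=1}^{K} B(x_j, 2r)$ and $\Gamma_0 := \Gamma \backslash \bigcup_{j=1}^{K} B(x_j, 2r)$. The threshold $K \geqslant K_0$ is tailored precisely so that $r_K < r_-(\Gamma)$, and combined with $r \leqslant r_K/16$ this forces $2r \leqslant r_K/8 < r_-(\Gamma)$; hence $(H_0)$ applies to every ball $B(x_j, 2r)$. The same boundary decomposition and isoperimetric step used in Lemma~\ref{entrop} then produce
\begin{equation*}
\vol(\Gamma_0) \;>\; \bigl[I_0(\Omega)^{n/(n-1)}\vol(\Omega) - KC\rho_{n-1}^{n/(n-1)}(2r)^n\bigr]^{(n-1)/n} - KC\rho_{n-1}(2r)^{n-1}.
\end{equation*}

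Next I would parametrize $r = \varepsilon\, r_K$ with $\varepsilon \in (0, 1/16]$ and substitute the explicit formula for $r_K$, which converts both right-hand side terms into clean multiples of $I_0(\Omega)\vol(\Omega)^{(n-1)/n}$. A short calculation yields
\begin{equation*}
KC\rho_{n-1}^{n/(n-1)}(2r)^n = \varepsilon^n\, 2^{n-1}\, (KC)^{-1/(n-1)}\, I_0(\Omega)^{n/(n-1)}\vol(\Omega),
\end{equation*}
\begin{equation*}
KC\rho_{n-1}(2r)^{n-1} = 2^{(n-1)^2/n}\, \varepsilon^{n-1}\, I_0(\Omega)\,\vol(\Omega)^{(n-1)/n}.
\end{equation*}
Applying the elementary bound $(1-x)^{(n-1)/n} \geqslant 1-x$ for $x \in [0,1]$ to the bracketed term and factoring out $I_0(\Omega)\vol(\Omega)^{(n-1)/n}$, the claim reduces to the purely numerical inequality
\begin{equation*}
1 - \varepsilon^n\, 2^{n-1}(KC)^{-1/(n-1)} - 2^{(n-1)^2/n}\, \varepsilon^{n-1} \;>\; \varepsilon^{(n-1)/n}.
\end{equation*}

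The main obstacle is this last elementary comparison, which must hold uniformly in $n \geqslant 2$, $K \geqslant K_0$, and $\varepsilon \in (0, 1/16]$. The key observation is that the surface correction carries the exponent $n-1$ on $\varepsilon$, while the target carries only $(n-1)/n < 1$, so for $\varepsilon$ small enough the former is dominated; the precise choice $\varepsilon \leqslant 1/16$ is exactly what is needed to absorb the prefactor $2^{(n-1)^2/n}$, since $16^{n-1} = 2^{4(n-1)}$ dwarfs $2^{(n-1)^2/n}$ for every $n \geqslant 2$. The volume correction is of order $\varepsilon^n$ and is bounded by $2^{-3n-1}$ uniformly in $K \geqslant 1$ (recall $C \in \N_{>1}$), hence negligible. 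Since both sides of the reduced inequality are monotone in $\varepsilon$, it suffices to verify it at the worst case $\varepsilon = 1/16$, where a one-line check settles every dimension $n \geqslant 2$ and closes the argument.
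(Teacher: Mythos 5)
Your proof is correct and follows the paper's strategy: both arguments revisit the intermediate inequality \eqref{eqd0} from Lemma \ref{entrop}, substitute the scaling of $r$ against $r_K$, factor out $I_0(\Omega)\vol(\Omega)^{\frac{n-1}{n}}$, and finish with an elementary estimate exploiting $r\leqslant \frac{1}{16}r_K$. The only variation is in the finishing step: writing $\alpha:=r_K/r\geqslant 16$, the paper bounds
$\left(1-\left(2^{\frac{n-1}{n}}/\alpha\right)^{n}\right)^{\frac{n-1}{n}}-\left(2^{\frac{n-1}{n}}/\alpha\right)^{n-1}\geqslant \left(1-\alpha^{-1}\right)^{\frac{n-1}{n}}-\alpha^{-\frac{n-1}{n}}=\alpha^{-\frac{n-1}{n}}\left[(\alpha-1)^{\frac{n-1}{n}}-1\right]$
and then uses $15^{\frac{n-1}{n}}\geqslant 2$, whereas you linearize via $(1-x)^{\frac{n-1}{n}}\geqslant 1-x$ and close by monotonicity together with a worst-case check at $\varepsilon=1/16$. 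Both closures are valid; the paper's algebraic factorization lands directly on $\alpha^{-\frac{n-1}{n}}$ and avoids the case check, while your Bernoulli route is more pedestrian but equally rigorous. Everything else (the applicability of $(H_0)$ since $2r<r_-(\Gamma)$, the two substitutions for $(2r)^n$ and $(2r)^{n-1}$, the bound $(KC)^{-\frac{1}{n-1}}\leqslant 1$, and the monotonicity reduction to $\varepsilon=1/16$) checks out.
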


%%
%\begin{lem}
%Under the hypotheses of Lemma \ref{entrop}, if $0<r\leqslant\frac{1}{4} r_K$ and $K\geqslant K_0$, then
%
%for every arbitrary set of points $\{x_j\}_{j=1}^K$ in $\Gamma$.
%\end{lem}
\begin{proof}
From \eqref{eqd0} in the proof of Lemma \ref{entrop}, one  has
$$ \vol(\Gamma_0)>[I_0(\Omega)^\frac{n}{n-1}\vol(\Omega)- K C\rho_n^\frac{n}{n-1}(2r)^n]^\frac{n-1}{n}-K C\rho_{n-1}(2r)^{n-1}.$$
Setting $\alpha:=\frac{r_K}{r}$ (we notice that $\alpha\geqslant 2^4$ since $r\leqslant\frac{1}{2^4}r_K$), we have
\begin{align}
(2r)^n & = \left(\frac{2}{\alpha}r_K\right)^{n}=\left(\frac{2}{\alpha}\right)^{n}\frac{\vol(\Om)}{2} \left(\frac{I_0(\Omega)}{KC\rho_{n-1}}\right)^{\frac{n}{n-1}}\nonumber\\
&\leqslant\frac{1}{KC\rho_{n-1}^{\frac{n}{n-1}}}\left(\frac{2^{\frac{n-1}{n}}}{\alpha}\right)^{n} I_0(\Omega)^{\frac{n}{n-1}}\vol(\Om),\label{eqf1}
\end{align}
where we have used that $KC\geqslant 1$.
On the other hand,
\begin{align}
(2r)^{n-1} & = \left(\frac{2}{\alpha}r_K\right)^{n-1}=\left(\frac{2}{\alpha}\right)^{n-1}\left(\frac{\vol(\Om)}{2}\right)^{\frac{n-1}{n}} \frac{I_0(\Omega)}{KC\rho_{n-1}}\nonumber\\
&\leqslant\frac{1}{KC\rho_{n-1}}\left(\frac{2^{\frac{n-1}{n}}}{\alpha}\right)^{n-1}I_0(\Omega)^{\frac{n}{n-1}}\vol(\Om).\label{eqf2}
\end{align}
From inequalities \eqref{eqf1} and \eqref{eqf2}, we get
\begin{multline}
\vol\left(\Gamma\backslash \bigcup_{j=1}^KB(x_j,2r)\right)\\
> \left[\left(1-\left(\frac{2^{\frac{n-1}{n}}}{\alpha}\right)^{n}\right)^{\frac{n-1}{n}}-\left(\frac{2^{\frac{n-1}{n}}}{\alpha}\right)^{n-1} \right]I_0(\Omega)\vol(\Omega)^{\frac{n-1}{n}}.
\end{multline}
We notice that, since $\alpha>2$,
\begin{align*}
\left(1-\left(\frac{2^{\frac{n-1}{n}}}{\alpha}\right)^{n}\right)^{\frac{n-1}{n}}-\left(\frac{2^{\frac{n-1}{n}}}{\alpha}\right)^{n-1} &\geqslant \left(1-\alpha^{-1} \right)^{\frac{n-1}{n}}-\alpha^{-\frac{n-1}{n}}\\
 &=\alpha^{-\frac{n-1}{n}}\left[\left(\alpha-1\right)^{\frac{n-1}{n}}-1\right]\\
  &\geqslant\alpha^{-\frac{n-1}{n}}\left[15^{\frac{n-1}{n}}-1\right].
\end{align*}

It follows that $$\vol\left(\Gamma\backslash \bigcup_{j=1}^KB(x_j,2r)\right)> \alpha^{-\frac{n-1}{n}} I_0(\Omega)\vol(\Omega)^{\frac{n-1}{n}},$$
 since $15^{\frac{n-1}{n}}\geqslant 2$ for every $n\geqslant 2$.
\end{proof}
Let $(M,g)$, $\Omega$ and $\Gamma$ be as described above and $r\in\R_{>0}$. 
The external covering number ${N^{ext}_r(\Gamma)}$ of $\Gamma$ in $M$ with respect to $r$ is defined as the fewest number of points $x_1,\dots,x_N \in M$ such that the balls $B(x_1,r),\dots,B(x_N,r)$ cover $\Gamma$. Lemmas \ref{entrop} and \ref{entrop1} imply the following principal lemma.
\begin{lem}\label{lemla}
Let $n\geqslant 2$ and $(M,g)$ be an $n$-dimensional Riemannian manifold. Let $\Omega\subset M$ a bounded domain with smooth boundary $\Gamma$. Then for every $K\geqslant K_0$ and $ 0<r\leqslant \frac{1}{2} r_K$,
 \begin{enumerate}[label=\roman*.]
 \item \label{eqla1}
$K< N^{ext}_r(\Gamma).$
\item \label{eqla2} If in addition $r\leqslant \frac{1}{16} r_K$ then for every arbitrary set of points $\{x_j\}_{j=1}^K$ in $M$, one has
\begin{equation}
\vol(\Gamma\backslash \bigcup_{j=1}^KB(x_j,r))> \left(\frac{r}{r_K}\right)^{\frac{n-1}{n}}I_0(\Omega)\vol(\Om)^{\frac{n-1}{n}}.
\end{equation}
 \end{enumerate}
\end{lem}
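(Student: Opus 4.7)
The plan is to reduce both parts to Lemmas \ref{entrop} and \ref{entrop1}, which provide analogous statements when the ball centers are constrained to lie on $\Gamma$. The single bridge between those settings and the present one is an elementary triangle-inequality observation: if $x\in M$ and $B(x,r)\cap\Gamma\neq\emptyset$, then for any $y\in B(x,r)\cap\Gamma$ one has $B(x,r)\cap\Gamma\subset B(y,2r)$. The doubling of radius is already absorbed in the previous two lemmas, whose conclusions concern balls of radius $2r$ while their hypotheses constrain $r$, so no extra constants are lost in the reduction.

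For \ref{eqla1}, I would argue by contradiction. Suppose $N^{ext}_r(\Gamma)\leqslant K$, so that $\Gamma$ is covered by balls $B(x_1,r),\ldots,B(x_N,r)$ with $N\leqslant K$ and centers in $M$. Discard any $x_j$ whose $r$-ball misses $\Gamma$ and, for each surviving center, choose $y_j\in B(x_j,r)\cap\Gamma$. Then $\Gamma\subset\bigcup_{j} B(y_j,2r)$ with every $y_j\in\Gamma$ and at most $K$ centers. The condition $K\geqslant K_0$ forces $r_K<r_-(\Gamma)$ via \eqref{eqd4}, so the assumption $r\leqslant\tfrac{1}{2}r_K$ also yields $r<\tfrac{1}{2}r_-(\Gamma)$; both hypotheses of Lemma \ref{entrop} are therefore met, and its conclusion $\vol(\Gamma\setminus\bigcup_{j} B(y_j,2r))>0$ contradicts the covering.

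For \ref{eqla2}, take an arbitrary set $\{x_j\}_{j=1}^K\subset M$ and let $J=\{j\colon B(x_j,r)\cap\Gamma\neq\emptyset\}$. For $j\in J$ choose $y_j\in B(x_j,r)\cap\Gamma$; the triangle-inequality observation gives
\begin{equation*}
\Gamma\setminus\bigcup_{j=1}^K B(x_j,r)\ \supset\ \Gamma\setminus\bigcup_{j\in J}B(y_j,2r).
\end{equation*}
Extend $\{y_j\}_{j\in J}$ to a $K$-tuple of points of $\Gamma$ by appending arbitrary centers; adding balls only shrinks the complement, so the bound produced by Lemma \ref{entrop1} for the extended $K$-tuple (legal because $r\leqslant\tfrac{1}{16}r_K$ and $K\geqslant K_0$) is \emph{a fortiori} a lower bound for the complement with $\{y_j\}_{j\in J}$, and hence, by the displayed inclusion, for the complement with the original $\{x_j\}_{j=1}^K$. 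This is exactly the claimed inequality.

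The potential obstacle is nothing more than careful bookkeeping: making sure the inclusions flow in the correct direction when passing from $M$-centered to $\Gamma$-centered balls, and verifying that the admissibility thresholds of the two previous lemmas are genuinely inherited from the hypothesis $K\geqslant K_0$. Both verifications are immediate from the definition \eqref{eqd4} of $K_0$, so no deeper difficulty is expected.
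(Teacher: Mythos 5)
Your proof is correct and follows essentially the same route as the paper: reduce both parts to Lemmas \ref{entrop} and \ref{entrop1} by replacing each $M$-centered ball $B(x_j,r)$ meeting $\Gamma$ with a $\Gamma$-centered ball $B(y_j,2r)\supset B(x_j,r)\cap\Gamma$, padding to exactly $K$ centers as needed. Your write-up is in fact slightly more careful than the paper's, which leaves the padding and the treatment of balls missing $\Gamma$ implicit in part (ii).
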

\begin{proof}
Suppose ${N^{ext}_r(\Gamma)}\leqslant K$ and let $\{B(x_j,r)\}_{j=1}^{N^{ext}_r(\Gamma)}$ be a minimal covering of $\Gamma$. By the minimality assumption, every $B(x_j,r)$ intersects $\Gamma$. For $j\in\{1,\ldots,{N^{ext}_r(\Gamma)}\}$, let $x'_j\in B(x_j,r)\cap \Gamma$, one has 
\begin{equation*}
B(x_j, r)\subset B(x'_j,2r),\quad \text{ for every } i\in\{1,\ldots, {N^{ext}_r(\Gamma)}\}.
\end{equation*}
This implies $$\vol\left(\Gamma\backslash \bigcup_{j=1}^{N^{ext}_r(\Gamma)}B(x'_j,2r)\right)\leqslant \vol\left(\Gamma\backslash \bigcup_{j=1}^{N^{ext}_r(\Gamma)}B(x_j,r)\right).$$
We complete the family  $\{B(x'_j,2r)\}_{j=1}^{N^{ext}_r(\Gamma)}$ to  $\{B(x'_j,2r)\}_{j=1}^K$ by setting $x'_j:=x'_1$ for ${N^{ext}_r(\Gamma)}<j\leqslant K$.
Then, applying Lemma \ref{entrop}, we have 
\begin{align*}
\vol(\Gamma\backslash \bigcup_{j=1}^{N^{ext}_r(\Gamma)}B(x_j,r))&\geqslant \vol(\Gamma\backslash \bigcup_{j=1}^{N^{ext}_r(\Gamma)}B(x'_j,2r))\\
&=\vol(\Gamma\backslash \bigcup_{j=1}^K B(x'_j,2r))>0.
\end{align*}
Hence, it is contradictory to $\Gamma\subset \bigcup_{j=1}^{N^{ext}_r(\Gamma)}B(x_j,r))$.
To prove \eqref{eqla2}, we notice that if $B(x_j,r)\cap \Gamma\neq \emptyset$ then $B(x_j,r)\subset B(x'_j,2r)$ with $x'\in \Gamma$. The inequality follows the applying Lemma \ref{entrop1}.
\end{proof}
%
%We will use, to proof $\ldots$, the following result which is a global version of  Lemme$\ldots$:

The next lemma of Colbois and Maerten \cite{CM} provides the final ingredient to prove the most technical results in this paper presented in Theorems \ref{mainanniv} and \ref{t1501}.

Let $(X, d)$ be a complete, locally compact metric space. Let $\varepsilon\in\N$ and $N:(0,\rho]\longrightarrow\N_{\geqslant 2}$ an  increasing function.  We say that 
$(X, d)$ satisfies the $(N,\varepsilon)$-covering property if each ball of radius $r$ can be covered by $N(r)$ balls of radius $\frac{r}{\varepsilon}$.  In order to simplify notation, we will write $N_r$ instead of  $N(r)$.

We denominate capacitor any  couple $(A, B)$ of subsets such  that $\emptyset\neq A \subset B \subset X$. Two capacitors $(A_1, B_1)$ and $(A_2, B_2)$ are disjoint if $B_1\cap B_2=\emptyset$.  A family of capacitors is a finite set of capacitors in $X$ that are pairwise disjoint. 

\begin{lem}[Colbois-Maerten, $2008$]\label{CMglobal}
Let  $(X, d, \mu)$ be a complete, locally compact metric measure space satisfying the $(N,4)$-covering property with $N:(0,\diam(X)]\longrightarrow \N_{\geqslant 2}$ a discrete positive function.
Let $r>0$ and $K\in\N$ such that  for every $x\in X$, $\mu(B(x,r))\leqslant\frac{\mu(X)}{4KN_r^2}$. Then there exists a family of $K$ capacitors $\{(A_i,B_i)\}_{1\leqslant i\leqslant K}$ with the following properties for $1\leqslant i,j\leqslant K$
\begin{enumerate}
\item $\mu(A_i)\geqslant \frac{\mu(X)}{2N_rK}$,
 \item $B_i=A_i^r:= \{x \in X,~ d(x,A_i) < r\}$ is
the $r$-neighbourhood of $A_i$ and  $d(B_i,B_j)>2r$ whenever $i\neq j$.
\end{enumerate} 
\end{lem}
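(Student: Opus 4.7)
The lemma is quoted from Colbois--Maerten \cite{CM}, and the approach I would take follows their iterative greedy construction. The mass hypothesis $\mu(B(x,r)) \leqslant \mu(X)/(4KN_r^2)$ is a non-concentration assumption: no single $r$-ball carries too much measure. The goal is to extract $K$ regions $A_i$, each of mass at least $\mu(X)/(2N_r K)$, such that their $r$-neighbourhoods $B_i=A_i^r$ are pairwise separated by distance greater than $2r$, equivalently such that the $A_i$'s are pairwise at distance greater than $4r$.

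First, I would cover $X$ by a controlled number of balls of radius $r/4$ using the $(N,4)$-covering property, possibly iterated from coverings of larger balls. This produces a discrete pool of small pieces whose overlap multiplicity is bounded in terms of $N_r$; it will serve both to locate candidate seeds and to bound the measure swept by each capacitor at later stages.

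Next, I would construct the capacitors $A_1,\ldots,A_K$ greedily. At the $i$-th stage, working in the residual set $X_i:=X\setminus\bigcup_{j<i}A_j^{3r}$, I would select a suitably centred ball $B(z_i,r)$ in $X_i$ carrying comparable mass and aggregate the small $r/4$-balls meeting it into $A_i$, arranging that $\mu(A_i)\geqslant \mu(X)/(2N_rK)$. This aggregation is forced by the mass hypothesis: since no single $r$-ball can carry more than $\mu(X)/(4KN_r^2)$, several small pieces must be combined to reach the required mass. Setting $B_i:=A_i^r$, the $A_j^{3r}$-removals ensure $d(A_i,A_j)>4r$ whenever $i\neq j$, and hence $d(B_i,B_j)>2r$, giving the required pairwise disjointness of the fattened sets.

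The main obstacle is the bookkeeping showing that $X_i$ retains enough mass to continue through all $K$ stages. For each $j<i$, one controls $\mu(A_j^{3r})$ via the covering property, which introduces a factor proportional to $N_r$ relative to $\mu(A_j)$. Summing yields a total removed mass bounded by roughly $\mu(X)/2$, so $X_i$ keeps at least half the total mass. The tightness of the factor $N_r^2$ in the hypothesis reflects exactly two distinct losses: the covering multiplicity at scale $r/4$ (factor $N_r$), and the fattening from $A_i$ to its $3r$-neighbourhood (another factor $N_r$). Fitting both into the final lower bound $\mu(A_i)\geqslant \mu(X)/(2N_rK)$ while still being able to locate a seed $z_i$ of comparable mass inside $X_i$ is the combinatorial heart of the argument.
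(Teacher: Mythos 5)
This lemma is quoted verbatim from Colbois--Maerten \cite{CM}; the paper does not reproduce its proof, so there is no internal argument to compare against and I can only assess your sketch on its own terms.

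There is a structural gap that cannot be repaired by adjusting constants. You build $A_i$ as a union of $r/4$-balls meeting a fixed seed ball $B(z_i,r)$, hence $A_i\subset B(z_i,3r/2)$. Covering $B(z_i,3r/2)$ by $N_{3r/2}$ balls of radius $3r/8<r$ and invoking the non-concentration hypothesis gives
\begin{equation*}
\mu(A_i)\leqslant\mu\bigl(B(z_i,3r/2)\bigr)\leqslant N_{3r/2}\,\sup_{x}\mu\bigl(B(x,r)\bigr)\leqslant\frac{N_{3r/2}}{4KN_r^2}\,\mu(X),
\end{equation*}
which is strictly below the required threshold $\mu(X)/(2N_rK)$ unless $N_{3r/2}\geqslant 2N_r$. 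That inequality is not among the hypotheses, and in every application in this paper the function $N$ is taken constant in $r$, so a capacitor confined to scale $O(r)$ is provably too light to meet the lower bound. The actual construction must let the scale of $A_i$ grow adaptively: one takes $A_i$ to be a ball $B(x_i,\rho_i)$ whose radius is increased, possibly far beyond $r$, until the mass threshold $\mu(X)/(2N_rK)$ is reached. The hypothesis $\mu(B(x,r))\leqslant\mu(X)/(4KN_r^2)$ then enters not to assemble $A_i$ out of $r$-scale pieces, as in your plan, but to control the mass of the shell of width $O(r)$ around $A_i$ that is excised between stages, which is what keeps at least half of $\mu(X)$ alive through all $K$ steps. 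Your outline correctly identifies the greedy inductive structure, the need for $4r$-separation of the cores, and the two sources of the factor $N_r^2$, but the essential mechanism---adaptive growth of the capacitor radius---is absent, and without it the mass lower bound is unreachable.
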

%For the sake of completeness we will present the proof of Lemma \ref{CMglobal} in the Appendix.
\begin{thm}\label{mainanniv}
Let $(M,g)$ be a complete Riemannian manifold of dimension $n\geqslant 2$. Let 
$\Omega\subset M$ be a bounded domain whose boundary $\Gamma$ is a
smooth hypersurface satisfying $(H_0)$. 
We assume that $M$, with respect to the distance associated to the metric $g$ satisfies the $(N,4)$-covering property for some discrete positive function $N$. \\
Then, for every integer $k\geqslant \frac{1}{4} K_0$ ( $K_0$ is the same as in \eqref{eqd4}), one has
\begin{multline}
\lambda_{W,k}^{\beta}(\Om)
\leqslant 
 C_1  \left(\frac{\vol(\Omega)}{\vol(\Gamma)}\right)^{1-\frac{2}{n}} \left(\frac{k}{\vol(\Gamma)}\right)^{\frac{2}{n}}\\
+ C_2  \left(\frac{I(\Omega)}{I_0(\Omega)} \right)^{1+\frac{2}{n-1}} \left[\frac{\vol(\Omega)}{\vol(\Gamma)}+\beta\right]\left(\frac{k}{\vol(\Gamma)}\right)^\frac{2}{n-1},
\end{multline}
where $C_1= 2^8(C\omega_n)^{\frac{2}{n}} N_r^2$, $C_2=2^{21} (C\rho_{n-1})^\frac{2}{n-1}N_r$ and $r:=\frac{1}{64}r_{(4k)}$.
\end{thm}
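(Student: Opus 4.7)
The plan is to invoke the min-max characterization \eqref{char}, which reduces the theorem to exhibiting a $(k+1)$-dimensional subspace of $\mathfrak{V}_\beta$ on which the Rayleigh quotient $R_\beta$ is bounded by the right-hand side. The natural construction is to use plateau test functions supported on pairwise disjoint capacitors produced by the Colbois-Maerten Lemma \ref{CMglobal}.

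First, I would set $r := r_{4k}/64$. Since $k \geq K_0/4$ forces $4k \geq K_0$, this scale satisfies $r < r_-(\Gamma)$, so $(H_0)$ and Lemma \ref{lemla} are both available. I would then apply Lemma \ref{CMglobal} to the metric measure space consisting of $M$ with the ambient distance $d_g$ and the $(n-1)$-dimensional boundary measure on $\Gamma$ (extended by zero to $M$), with $K = 4k$ and this radius $r$. The covering property is given by hypothesis; the quantitative condition $\vol(B(x,r)\cap\Gamma) \leq \vol(\Gamma)/(16 k N_r^2)$ is the technical verification, which I would establish by combining $(H_0)$ with Lemma \ref{lemla}(\ref{eqla2}) to convert its collective co-covering lower bound into an upper bound on individual ball-intersections with $\Gamma$. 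The output is a family of $4k$ disjoint capacitors $(A_i, B_i)$ with $B_i = A_i^r$, pairwise $2r$-separated, and $\vol(A_i \cap \Gamma) \geq \vol(\Gamma)/(8 N_r k)$.

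Next, I would define the plateau functions $\phi_i(x) := \max\{0,\, 1 - d(x, A_i)/r\}$ on $M$. Each $\phi_i$ equals $1$ on $A_i$, is supported in $B_i$, and has Lipschitz constant $1/r$. The disjointness of the $B_i$'s implies that any linear combination $u = \sum c_i \phi_i$ satisfies $R_\beta(u) \leq \max_i R_\beta(\phi_i)$, reducing the problem to bounding
\[
R_\beta(\phi_i) \leq \frac{1}{r^2}\cdot\frac{\vol(B_i \cap \Omega) + \beta\, \vol(B_i \cap \Gamma)}{\vol(A_i \cap \Gamma)}.
\]
Two complementary bounds on the numerator, combined with a pigeonhole selection among the $4k$ indices, should yield the two terms of the statement. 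The uniform estimate $\vol(B_i \cap \Omega) \leq 2^n C \omega_n r^n$ coming from $(H_0)$, once $r = r_{4k}/64$ is substituted, produces the first term with constant $2^8(C\omega_n)^{2/n}N_r^2$ and exponent $2/n$. The pigeonhole applied to $\sum_i \vol(B_i \cap \Omega) \leq \vol(\Omega)$ and $\sum_i \vol(B_i \cap \Gamma) \leq \vol(\Gamma)$ lets me retain $k+1$ indices for which both $\vol(B_i \cap \Omega) \lesssim \vol(\Omega)/k$ and $\vol(B_i \cap \Gamma) \lesssim \vol(\Gamma)/k$; these produce the second term, where the factor $\vol(\Omega)/\vol(\Gamma) + \beta$ arises from combining the two contributions to the numerator with the explicit form of $r_{4k}^{-2}$, which scales as $k^{2/(n-1)}$ after expanding $I_0(\Omega)$ and inserting $I(\Omega) = \vol(\Gamma)/\vol(\Omega)^{(n-1)/n}$.

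The hard part will be verifying the Colbois-Maerten hypothesis for the boundary measure: hypothesis $(H_0)$ controls the $n$-volume of small ambient balls and the $(n-1)$-volume of geodesic spheres, but it does not directly bound the $\Gamma$-mass of an ambient ball. Converting the collective co-volume estimate of Lemma \ref{lemla}(\ref{eqla2}) into a pointwise upper bound on $\vol(B(x,r)\cap\Gamma)$ is where the factor $N_r^2$ enters, via the $(N,4)$-covering hypothesis. A secondary bookkeeping point is that the pigeonhole roughly doubles the number of capacitors consumed: this is why the statement is phrased with $K=4k$ rather than $K=k$ and why the constants $C_1, C_2$ carry the factors $N_r^2$ and $N_r$ respectively in the precise form stated.
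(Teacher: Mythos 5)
Your outline correctly identifies the min-max reduction, the Colbois--Maerten Lemma~\ref{CMglobal}, the plateau test functions, and the pigeonhole among $4k$ disjoint sets, but there is a genuine gap at the step you yourself flag as ``the hard part.'' You propose to verify the Colbois--Maerten hypothesis $\mu(B(x,r))\leqslant \mu(M)/(4KN_r^2)$ for the boundary measure $\mu(A)=\vol(A\cap\Gamma)$ by ``converting'' the collective co-covering estimate of Lemma~\ref{lemla}(ii) into a pointwise upper bound on $\vol(B(x,r)\cap\Gamma)$. That conversion is not possible: Lemma~\ref{lemla}(ii) is a lower bound on the $\Gamma$-mass that survives outside a collection of balls, and says nothing that would prevent a single ball from carrying a macroscopic fraction of $\vol(\Gamma)$ (e.g.\ when $\Gamma$ concentrates near one point). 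Indeed the hypothesis $\mu(B(x,r))\leqslant\mu(M)/(4KN_r^2)$ simply need not hold, and $(H_0)$ gives no handle on $\vol(B(x,r)\cap\Gamma)$; it only controls $n$-volumes of balls and $(n-1)$-volumes of geodesic spheres.

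The paper's proof resolves this with a dichotomy that your proposal is missing. One first chooses points $x_1,\dots,x_{4k}$ greedily so that the balls $B(x_j,2r)$ are disjoint and $\mu(B(x_j,r))$ is decreasing, with $x_j$ maximizing $\mu(B(\cdot,r))$ over the complement of the already-removed $4r$-balls. Then one splits on whether $\mu(B(x_{4k},r))$ is below or above the threshold $\mu(M)/(16kN_r^2)$. In the ``small'' case, the greedy construction makes the Colbois--Maerten hypothesis hold for the \emph{restricted} measure on $\Gamma_0 = \Gamma\setminus\bigcup_j B(x_j,4r)$, and Lemma~\ref{lemla}(ii) is used only to give the lower bound $\mu_0(M_0)\gtrsim I_0(\Omega)\vol(\Omega)^{(n-1)/n}$ needed in the denominator of the Rayleigh quotient; this produces the $k^{2/(n-1)}$ term. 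In the ``large'' case, one does not invoke Colbois--Maerten at all: the balls $B(x_j,r)$ themselves already carry mass $\geqslant\mu(M)/(16kN_r^2)$, and one uses plateau functions supported directly on the $B(x_j,2r)$; here one exploits H\"older's inequality together with the volume bound on balls from $(H_0)$ to get the $k^{2/n}$ term. Note that this H\"older step requires the support to be an actual metric ball so that $(H_0)$ controls its volume; it could not be run on the Colbois--Maerten capacitors $A_i^r$, which are not balls. So both the dichotomy and the different test-function constructions in the two cases are essential, not bookkeeping, and neither can be absorbed into the single-route argument you propose.
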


\begin{proof}%[Proof of Theorem \ref{mainanniv}]
The methods we use in this proof are inspired by \cite{CEG2013}. 
We consider the  metric measure space $(M,d, \mu)$, where  $d$ is the distance from the metric $g$ and $\mu$ is the Borel measure with support $\Gamma$ defined for each Borelian $A$ of  $M$ by
$$\mu(A):=\int_{A\cap\Gamma} \mathrm{d}_{\Gamma}.$$
Fix  $K=4k$ and  choose in  $M$ a family of points $\{x_j\}_{j=1}^{K}$ satisfying 
\begin{equation}
\begin{cases}
B(x_j,2r)\cap B(x_i,2r)=\emptyset\qquad \text{ for all } 1\leqslant i\neq j\leqslant K,\\
\mu(B(x_1,r))\geqslant\mu(B(x_2, r))\geqslant\ldots\geqslant\mu(B(x_{K},r))\geqslant\mu(B(x,r)),\label{eqg}
\end{cases} 
\end{equation}
for all $x\in M_0:=M\backslash \bigcup_{j=1}^{K}B(x_j,4r) $.
This can be done inductively, selecting the point $x_1$ such that 

$$\mu (B(x_1,r))=\sup\{\mu (B(x,r)),~  x\in M\},$$
and the points  $x_j$, for $j=2,\ldots, K$, such that 
$$\mu (B(x_j,r))=\sup\{\mu (B(x,r)),~  x\in M\backslash \bigcup_{i=1}^{j-1}B(x_i,4r)\}.$$
% $B(a_j,\frac{1}{2} r)\cap \Gamma\neq \emptyset$,\label{eqa2}
%\item $\vol(\Gamma\backslash \bigcup_{j=1}^{4k}B(x_j,2r))> \frac{1}{4^n}I_0(\Omega)\vol(\Om)^{\frac{n-1}{n}}$.\label{eqa3}
%
%for $x\in M_0:=M\backslash \bigcup_{j=1}^{4k}B(x_j,4r) $ where $\mu(A):=\vol(A\cap\Gamma)$ for every borelian $A$ in $M$.
There are two possible cases: 
\paragraph*{Case $\mu(B(x_{K},r))\leqslant\frac{\mu(M)}{4KN_r^2}$.}
We consider the metric measure space\\ $(M_0, d, \mu_0)$ where $\mu_0$ is defined by 
$$\mu_0(A):=\int_{A\cap \Gamma_0}   \mathrm{d}_{\Gamma},\quad \Gamma_0:=\Gamma\backslash \bigcup_{i=1}^{K}B(x_i,4r).$$
 for every Borelian A in $M$.
Since $4r=\frac{1}{16}r_{K}$, it follows from Lemma \ref{lemla}, that 
 \begin{equation*}
 \mu_0(M_0)=\vol(\Gamma_0)>  \frac{1}{16^{\frac{n-1}{n}}}I_0(\Omega)\vol(\Om)^{\frac{n-1}{n}}.
 \end{equation*}
  From \eqref{eqg} one has
$\mu_0(B(x,r))\leqslant\mu(B(x,r))\leqslant\frac{\mu(M)}{4KN_r^2}$
for every $x\in M_0$. Applying Lemma \ref{CMglobal}, we have a family of $K$ capacitors $\{(A_i,B_i)\}_{1\leqslant i\leqslant K}$ with the following properties for $1\leqslant i,j\leqslant K$:
\begin{enumerate}
\item $\mu_0(A_i)\geqslant \frac{\mu_0(M_0)}{2N_rK}$,
 \item $B_i=A_i^r= \{x \in X ,~ d(x,A_i) < r\}$ is
the $r$-neighborhood of $A_i$ and  $d(B_i,B_j)>2r$ whenever $i\neq j$.
\end{enumerate} 
We notice that $\mu_0(M_0)=\vol(\Gamma_0)$.\\
%%%%
For each $1\leqslant j \leqslant K$, we consider the function $\varphi_j$ supported
in $A_j^{r}$ defined by 
$$\varphi_j(x):=
\begin{cases}
1-\frac{d(A_j,x)}{r} \quad &\forall~ x\in A_j^{r},\\
0\quad &\forall~ x\in M\backslash A_j^{r}.
\end{cases}
$$
It follows that $R_\beta(\varphi_j)\leqslant\frac{\int_{\Om\cap A_j^{r}}|\nabla \varphi_j|^2 \mathrm{d}_M+\beta\int_{\Gamma\cap A_j^{r}}{|\nabla {\varphi_j}|^2 \mathrm{d}_\Gamma}}{\int_{\Gamma\cap A_j}{{\varphi_j}^2 \mathrm{d}_\Gamma}}.$
\begin{enumerate}[label=\roman*)]
\item One has
$$
 \int_{\Om\cap A_j^{r}}|\nabla \varphi_j|^2 \mathrm{d}_M \leqslant \frac{1}{r^2}\vol(\Om\cap A_j^{r}).\\
$$
The $A_j^{r}$'s are pairwise disjoint then $\sum_{j=1}^{4k}\vol(\Om\cap A_j^{r})\leqslant \vol(\Om)$. We deduce that at least $2k$ of $A_j^{r}$'s satisfy
\begin{equation}\label{eqh1}
\vol(\Om\cap A_j^{r})\leqslant\frac{\vol(\Om)}{k}.
\end{equation} 
Up to re-ordering, we assume that for the first $2k$ of the $A_j^{r}$'s we have \eqref{eqh1}.
Hence, 
$$ \int_{\Om\cap A_j^{r}}|\nabla \varphi_j|^2 \leqslant\frac{1}{r^2} \frac{\vol(\Om)}{k},\quad \forall~ 1\leqslant j\leqslant 2k.$$
\item \label{biii}
By the same arguments, at least $k$ of the $A_j^{r}$'s satisfy
\begin{equation}\label{alka2}
\vol(\Gamma\cap A_j^{r})\leqslant\frac{\vol(\Gamma)}{k}.
\end{equation} 
Up to re-ordering, we assume that for the first $k$ of the $A_j^{r}$'s  \eqref{alka2} holds.
Hence, 
$$ \int_{\Gamma\cap A_j^{r}}|\nabla f_i|^2\leqslant \frac{1}{r^2} \frac{\vol(\Gamma)}{k},\quad \forall~ 1\leqslant j\leqslant k.$$
\end{enumerate}
Since $\int_{\Gamma\cap A_j}{{\varphi_j}^2 \mathrm{d}_\Gamma}\geqslant \int_{\Gamma_0\cap A_j}{~\mathrm{d}_\Gamma}=\mu_0(A_j)\geqslant\frac{\vol(\Gamma_0)}{8N_r k}$, we have
\begin{align*}
R_\beta(\varphi_j)&\leqslant  \frac{8N_r k}{\vol(\Gamma_0)}\left[ \frac{1}{r^2} \frac{\vol(\Om)}{k}+\beta \frac{1}{r^2} \frac{\vol(\Gamma)}{k}   \right]\\
&= \frac{8N_r}{r^2\vol(\Gamma_0)} \left[  \vol(\Om)+\beta \vol(\Gamma)\right].
\end{align*}
However,
\begin{equation*}
\frac{1}{r^2}=\left(\frac{2^6}{r_{(4k)}}\right)^2=2^{12}\left(\frac{2}{\vol(\Omega)}\right)^{\frac{2}{n}} \left( \frac{4kC\rho_{n-1}}{I_0(\Omega)}\right)^{\frac{2}{n-1}}
\end{equation*}
and $$\vol(\Gamma_0)>   \frac{1}{16^{\frac{n-1}{n}}}I_0(\Omega)\vol(\Om)^{\frac{n-1}{n}}.$$
Thus, $$\frac{1}{r^2\vol(\Gamma_0)}\leqslant 2^{19}\frac{(kC\rho_{n-1})^{\frac{2}{n-1}}}{\vol(\Omega)^{1+\frac{1}{n}}I_0(\Omega)^{1+\frac{2}{n-1}}}.$$
We get
\begin{align*}
&R_\beta(\varphi_j)\leqslant
2^{21}N_r \frac{(kC\rho_{n-1})^{\frac{2}{n-1}}}{ \vol(\Om)^{1+\frac{1}{n}}I_0(\Omega)^{1+\frac{2}{n-1}}} \left[  \vol(\Om) +\beta \vol(\Gamma) \right]\\
&\leqslant  2^{21}N_r(C\rho_{n-1})^{\frac{2}{n-1}}
\left(\frac{k}{\vol(\Gamma)} \right)^{\frac{2}{n-1}}
 \frac{\vol(\Gamma)^{1+\frac{2}{n-1}}}{ \vol(\Om)^{1+\frac{1}{n}}I_0(\Omega)^{1+\frac{2}{n-1}}} \left[  \frac{\vol(\Om)}{\vol(\Gamma)} +\beta  \right]\\
&= 2^{21} N_r(C\rho_{n-1})^\frac{2}{n-1} \left(\frac{k}{\vol(\Gamma)}\right)^\frac{2}{n-1} \left(\frac{I(\Omega)}{I_0(\Omega)} \right)^{1+\frac{2}{n-1}} \left[ \frac{\vol(\Gamma)^\frac{1}{n-1}}{I(\Omega)^{1+\frac{1}{n-1}}}+\beta\right].
\end{align*}
Hence, 
\begin{align}
&\lambda_{W,k}^{\beta}(\Om)
\leqslant \max_{1\leqslant j\leqslant k} R_\beta(\varphi_j) \nonumber \\
&\leqslant 2^{21} N_r(C\rho_{n-1})^\frac{2}{n-1} \left(\frac{k}{\vol(\Gamma)}\right)^\frac{2}{n-1} \left(\frac{I(\Omega)}{I_0(\Omega)} \right)^{1+\frac{2}{n-1}} \left[ \frac{\vol(\Gamma)^\frac{1}{n-1}}{I(\Omega)^{1+\frac{1}{n-1}}}+\beta\right].\label{esti1}
%&\leqslant 2^{21} N_r(C\rho_{n-1})^\frac{2}{n-1} \left(\frac{k}{\vol(\Gamma)}\right)^\frac{2}{n-1} \left(\frac{I(\Omega)}{I_0(\Omega)} \right)^{1+\frac{2}{n-1}} \left[ \frac{\vol(\Gamma)^\frac{1}{n-1}}{I(\Omega)^{1+\frac{1}{n-1}}}+\beta\right].
\end{align}
\paragraph*{Case $\mu(B(x_{K},r))>\frac{\mu(X)}{4KN_r^2}$.}
From \eqref{eqg} one has
$\mu(B(x_j,r))\geqslant\frac{\mu(X)}{4KN_r^2}$
for every $1\leqslant j\leqslant K$.
 We consider, for $1\leqslant j \leqslant 4k$,the function $f_j$ supported
in $B_j:=B(x_j,2r)$ and defined by 
$$f_j(x):=
\begin{cases}
\min \{1,2-\frac{d(x_j,x)}{r}\}\quad &\forall~ x\in B_j,\\
0\quad &\forall~ x\in M\backslash B_j.
\end{cases}
$$
Set $A_j:=B(x_j,r)$, then the Rayleigh quotient of $f_j$ satisfies
$$R_\beta(f_j)\leqslant\frac{\int_{\Om\cap B_j}|\nabla f_j|^2 \mathrm{d}_M+\beta\int_{\Gamma\cap B_j}{|\nabla {f_j}|^2 \mathrm{d}_\Gamma}}{\int_{\Gamma\cap A_j}{{f_i}^2 \mathrm{d}_\Gamma}}.$$
\begin{enumerate}[label=\roman*)]
\item \label{ai} Since  for every $x\in A_j$, $f_j(x)=1$, one has
$$\int_{\Gamma\cap A_j}{{f_j}^2 \mathrm{d}_\Gamma}\geqslant\int_{\Gamma\cap A_j}\mathrm{d}_\Gamma\geqslant \mu(A_j)\geqslant \frac{\vol(\Gamma)}{16N_r^2 k}. $$
\item\label{aii} Set for $x\in M$, $d_j(x):=\dist(x_j, x)$, then
$$|\nabla f_j|\leqslant |\nabla(2-\frac{d_j(x)}{r})|=|\frac{1}{r}\nabla(d_j(x))|\leqslant\frac{1}{r}.$$ 
 By H\"older's inequality, we have 
 \begin{align*}
 \int_{\Om\cap B_j}|\nabla f_j|^2 \mathrm{d}_M &\leqslant \left(\int_{\Om\cap B_j}|\nabla f_j|^n \mathrm{d}_M \right)^{\frac{2}{n}}\left(\int_{\Om\cap B_j}\mathrm{d}_M \right)^{1-\frac{2}{n}}\\
 &\leqslant \left(\frac{1}{r^n}\vol(B_j) \right)^{\frac{2}{n}}\left( \vol(\Om\cap B_j) \right)^{1-\frac{2}{n}}.\\
 \end{align*}
Notice that $B_j\cap\Gamma\supset A_j\cap\Gamma \neq \emptyset$. Let $x_j'\in B_j\cap\Gamma$, one has $B_j\subset B(x_j',4r)$. Since $4r\leqslant r_K<r_-(\Gamma)$, 
 $$\vol(B_j)\leqslant \vol(B(x_j',4r))< C\omega_n(4r)^n.$$
In addition, since the $B_j$'s are pairwise disjoint, we have 
$$\sum_{j=1}^{4k}\vol(\Om\cap B_j)\leqslant \vol(\Om).$$ 
We deduce that at least $2k$ of $B_j$'s satisfy
\begin{equation}
\vol(\Om\cap B_j)\leqslant\frac{\vol(\Om)}{k}.\label{alk1}
\end{equation} 
Up to re-ordering, we assume that for the first $2k$ of the $B_j$'s \eqref{alk1} holds.
Hence, 
$$ \int_{\Om\cap B_j}|\nabla f_j|^2 \leqslant (C\omega_n4^n)^{\frac{2}{n}}\left(\frac{\vol(\Om)}{k} \right)^{1-\frac{2}{n}},\quad \forall~ 1\leqslant j\leqslant 2k.$$
\item \label{aiii}
 Again the $B_j$'s are pairwise disjoint then $\sum_{j=1}^{4k}\vol(\Gamma\cap B_j)\leqslant \vol(\Gamma)$. Hence at least $k$ of the $B_j$'s satisfy
\begin{equation}\label{alk2}
\vol(\Gamma\cap B_j)\leqslant\frac{\vol(\Gamma)}{k}.
\end{equation} 
Up to re-ordering, we assume that for the first $k$ of the $B_j$'s, inequality holds \eqref{alk2}.
Thus, 
$$ \int_{\Gamma\cap B_j}|\nabla f_i|^2\leqslant \frac{1}{r^2}\left(\frac{\vol(\Gamma)}{k} \right)^{1-\frac{2}{n-1}},\quad \forall~ 1\leqslant j\leqslant k.$$
\end{enumerate}
Hence, one has
\begin{align*}
R_\beta(f_j) &\leqslant
 \frac{16N_r^2 k}{\vol(\Gamma)}\left[(C\omega_n4^n)^{\frac{2}{n}}\left(\frac{\vol(\Om)}{k} \right)^{1-\frac{2}{n}}+\beta \frac{1}{r^2} \frac{\vol(\Gamma)}{k} \right]\\
&\leqslant
2^8 N_r^2(C\omega_n)^{\frac{2}{n}} \left(\frac{k}{\vol(\Gamma)}\right)^{\frac{2}{n}} \left(\frac{\vol(\Omega)}{\vol(\Gamma)}\right)^{1-\frac{2}{n}} \\
&~+ \beta 2^{10} N_r^2(C\rho_{n-1})^{\frac{2}{n-1}}\left( \frac{k}{\vol(\Gamma)}\right)^{\frac{2}{n-1}} \left(\frac{I(\Omega)}{I_0(\Omega)} \right)^{\frac{2}{n-1}}.
\end{align*}
Since $\frac{I(\Omega)}{I_0(\Omega)}\geqslant 1$, regarding the right hand side of \eqref{esti1}, we have 
\begin{align*}
R_\beta(f_j)\leqslant &
2^8 N_r^2(C\omega_n)^{\frac{2}{n}} \left(\frac{k}{\vol(\Gamma)}\right)^{\frac{2}{n}} \left(\frac{\vol(\Omega)}{\vol(\Gamma)}\right)^{1-\frac{2}{n}} \\
&+\beta 2^{21} N_r^2(C\rho_{n-1})^{\frac{2}{n-1}}\left( \frac{k}{\vol(\Gamma)}\right)^{\frac{2}{n-1}} \left(\frac{I(\Omega)}{I_0(\Omega)} \right)^{1+\frac{2}{n-1}}.
\end{align*}
Then, in this case 
\begin{align}
\lambda_{W,k}^{\beta}(\Om)
\leqslant & \max_{1\leqslant j\leqslant k} R_\beta(\varphi_j) \nonumber \\
\leqslant &
2^8 N_r^2(C\omega_n)^{\frac{2}{n}}\left(\frac{k}{\vol(\Gamma)}\right)^{\frac{2}{n}} \left(\frac{\vol(\Omega)}{\vol(\Gamma)}\right)^{1-\frac{2}{n}} \nonumber \\
&+\beta 2^{21} N_r^2(C\rho_{n-1})^{\frac{2}{n-1}}\left( \frac{k}{\vol(\Gamma)}\right)^{\frac{2}{n-1}} \left(\frac{I(\Omega)}{I_0(\Omega)} \right)^{1+\frac{2}{n-1}}.\label{esti2}
\end{align}
From \eqref{esti1} and \eqref{esti2}, in both possible cases we have
\begin{align}
\lambda_{W,k}^{\beta}(\Om)
\leqslant &
 C_1 \left(\frac{k}{\vol(\Gamma)}\right)^{\frac{2}{n}} \left(\frac{\vol(\Omega)}{\vol(\Gamma)}\right)^{1-\frac{2}{n}}\nonumber \\
&+ C_2 \left(\frac{k}{\vol(\Gamma)}\right)^\frac{2}{n-1} \left(\frac{I(\Omega)}{I_0(\Omega)} \right)^{1+\frac{2}{n-1}} \left[\frac{\vol(\Omega)}{\vol(\Gamma)}+\beta\right],
\end{align}
where $C_1:= 2^8 N_r^2(C\omega_n)^{\frac{2}{n}}$ and $C_2:=2^{21} N_r(C\rho_{n-1})^\frac{2}{n-1}$. This ends the proof.
\end{proof}
When $n\geqslant 3$, Theorem \ref{mainanniv} can be extended to cover all eigenvalues as follows:
\begin{thm}\label{t1501}
Let $(M,g)$ be a complete Riemannian manifold of dimension $n\geqslant 3$ and let 
$\Omega\subset M$ be a bounded domain whose boundary $\Gamma$ is a
smooth hypersurface satisfying the hypothesis $(H_0)$. 
We assume that $M$, with respect to the distance associated to the metric $g$ satisfies the $(N,4)$-covering property for some discrete positive function $N$. \\
 Then, for every $k\geqslant 1$, one has
\begin{multline}
\lambda_{W,k}^{\beta}(\Om)
\leqslant C(\Omega,\beta)
+ C_1 \left(\frac{\vol(\Omega)}{\vol(\Gamma)}\right)^{1-\frac{2}{n}} \left(\frac{k}{\vol(\Gamma)}\right)^{\frac{2}{n}}\\
+ C_2  \left(\frac{I(\Omega)}{I_0(\Omega)} \right)^{1+\frac{2}{n-1}} \left[ \frac{\vol(\Omega)}{\vol(\Gamma)}+\beta\right]\left(\frac{k}{\vol(\Gamma)}\right)^\frac{2}{n-1},
\end{multline}
where the constants $C_1$ and $C_2$ are the same as in Theorem \ref{mainanniv} and
\begin{multline}
C(\Omega,\beta):=\frac{C_1}{\left(C\rho_{n-1}r_-(\Gamma)^{n-1}\right )^{\frac{2}{n}}} \left(\frac{\vol(\Omega)}{\vol(\Gamma)}\right)^{1-\frac{2}{n}}\\ + \frac{C_2}{\left(C\rho_{n-1}r_-(\Gamma)^{n-1}\right )^{\frac{2}{n-1}}}\left(\frac{I(\Omega)}{I_0(\Omega)} \right)^{1+\frac{2}{n-1}} \left[ \frac{\vol(\Omega)}{\vol(\Gamma)}+\beta\right].
\end{multline}
\end{thm}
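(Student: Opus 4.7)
The bound in Theorem \ref{mainanniv} is established only in the regime $k \geq \tfrac14 K_0$, so the extension to all $k \geq 1$ must account for the ``small eigenvalues'' $1 \leq k < \tfrac14 K_0$. My plan is to split into two cases and use the monotonicity of the Wentzel spectrum, $\lambda_{W,k}^\beta(\Omega) \leq \lambda_{W,k'}^\beta(\Omega)$ for $k \leq k'$, to pull every small index up to a fixed threshold where Theorem \ref{mainanniv} already applies. The added constant $C(\Omega,\beta)$ in the statement is precisely the bound that emerges from Theorem \ref{mainanniv} evaluated at that threshold.

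\textbf{Case $k \geq \tfrac14 K_0$.} Here Theorem \ref{mainanniv} applies verbatim and produces the two $k$-dependent terms on the right-hand side of Theorem \ref{t1501}. Since $C(\Omega,\beta)$ is non-negative, it may simply be added, giving the desired bound with room to spare.

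\textbf{Case $1 \leq k < \tfrac14 K_0$.} I set $k' := \lceil \tfrac14 K_0 \rceil \geq \tfrac14 K_0$, so by monotonicity $\lambda_{W,k}^\beta(\Omega) \leq \lambda_{W,k'}^\beta(\Omega)$ and Theorem \ref{mainanniv} controls the right-hand side at index $k'$. Now I need to translate the resulting bound, which is expressed in powers of $k'/\vol(\Gamma)$, into a quantity independent of $k$. For this I use the defining formula
\begin{equation*}
K_0 \leq \frac{I_0(\Omega)}{C\rho_{n-1}r_-(\Gamma)^{n-1}}\Bigl(\frac{\vol(\Omega)}{2}\Bigr)^{\frac{n-1}{n}} + 1
\end{equation*}
combined with the trivial inequality $I_0(\Omega) \leq I(\Omega) = \vol(\Gamma)/\vol(\Omega)^{(n-1)/n}$ (which holds because $\Omega$ is itself admissible in the infimum defining $I_0$). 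Together these yield $K_0 \lesssim \vol(\Gamma)/(C\rho_{n-1}r_-(\Gamma)^{n-1})$, hence
\begin{equation*}
\frac{k'}{\vol(\Gamma)} \;\lesssim\; \frac{1}{C\rho_{n-1}r_-(\Gamma)^{n-1}}.
\end{equation*}
Substituting this into the two power terms of Theorem \ref{mainanniv} produces exactly the two summands that define $C(\Omega,\beta)$: the factor $(k'/\vol(\Gamma))^{2/n}$ turns into $(C\rho_{n-1}r_-(\Gamma)^{n-1})^{-2/n}$, and $(k'/\vol(\Gamma))^{2/(n-1)}$ into $(C\rho_{n-1}r_-(\Gamma)^{n-1})^{-2/(n-1)}$, each multiplied by the geometric prefactors already present in Theorem \ref{mainanniv}.

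\textbf{Main obstacle.} The argument is almost bookkeeping, but the care needed lies in the ``$+1$'' in the definition of $K_0$ and the ceiling in $k' = \lceil K_0/4 \rceil$. If $K_0 \leq 4$ then $k' = 1$ and the bound becomes vacuous up to the dimension-dependent constants; if $K_0 > 4$ then $k' \leq K_0/2$ and the estimate $k' \lesssim K_0$ is tight. In either sub-case the constants $C_1,C_2$ from Theorem \ref{mainanniv} must be checked to absorb the resulting numerical loss, which is the only delicate point; otherwise the argument is a clean monotonicity + plug-in.
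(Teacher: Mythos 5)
Your proposal is correct and follows essentially the same plan as the paper: monotonicity of the Wentzel spectrum reduces $1\leqslant k<\tfrac14 K_0$ to a threshold index where Theorem \ref{mainanniv} applies, and then the defining formula \eqref{eqd4} for $K_0$ together with $I_0(\Omega)\leqslant I(\Omega)$ converts the threshold into the geometric constant $C(\Omega,\beta)$. The paper pulls small indices all the way up to $K_0$ (rather than to $\lceil K_0/4\rceil$) and then handles the "$+1$" you flag as the delicate point more cleanly via the subadditivity $(a+1)^{2/p}\leqslant a^{2/p}+1\leqslant a^{2/p}+k^{2/p}$ (valid since $2/p\leqslant 1$ and $k\geqslant 1$), which lets the $k$-dependent power terms reappear on the right and avoids any case split on the size of $K_0$; your route needs the vacuity observation when $K_0$ is small, but it lands on the same constants.
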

\begin{proof}
For $1\leqslant k < K_0$, one has 
\begin{align}
\lambda_{W,k}^{\beta}(\Om)
\leqslant &\lambda_{W,K_0}^{\beta}(\Om)\nonumber\\
\leqslant &C_1 \left(\frac{K_0}{\vol(\Gamma)}\right)^{\frac{2}{n}} \left(\frac{\vol(\Omega)}{\vol(\Gamma)}\right)^{1-\frac{2}{n}}\nonumber \\
&+ C_2 \left(\frac{K_0}{\vol(\Gamma)}\right)^\frac{2}{n-1} \left(\frac{I(\Omega)}{I_0(\Omega)} \right)^{1+\frac{2}{n-1}} \left[ \frac{\vol(\Gamma)^\frac{1}{n-1}}{I(\Omega)^{1+\frac{1}{n-1}}}+\beta\right].\label{ineq18052020}
\end{align}
However, $K_0\leqslant  {\frac{I_0(\Omega)}{C\rho_{n-1}r_-(\Gamma)^{n-1}} \left(\frac{\vol(\Omega)}{2}\right)^{\frac{n-1}{n}}}+1 $, using the triangle inequality, we obviously have
for every $p\in\N_{\geqslant 2}$
\begin{align*}
K_0^{\frac{2}{p}}&\leqslant \left({\frac{I_0(\Omega)}{C\rho_{n-1}r_-(\Gamma)^{n-1}} \left(\frac{\vol(\Omega)}{2}\right)^{\frac{n-1}{n}}}\right)^{\frac{2}{p}}+1\\
&\leqslant \left(\frac{\vol(\Gamma)}{2^{\frac{n-1}{n}}C\rho_{n-1}r_-(\Gamma)^{n-1}}\frac{I_0(\Omega)}{I(\Omega)}\right )^{\frac{2}{p}}+1\\
&\leqslant \left(\frac{\vol(\Gamma)}{C\rho_{n-1}r_-(\Gamma)^{n-1}}\right )^{\frac{2}{p}}+k^{\frac{2}{p}}.
\end{align*}
We set $C_3:=\left(\frac{1}{C\rho_{n-1}r_-(\Gamma)^{n-1}}\right )^{\frac{2}{p}}$, replacing in \eqref{ineq18052020}, we get
\begin{align}
\lambda_{W,k}^{\beta}(\Om)
\leqslant &C_1 \left\{C_3+\left(\frac{k}{\vol(\Gamma)}\right)^{\frac{2}{n}}\right\} \left(\frac{\vol(\Omega)}{\vol(\Gamma)}\right)^{1-\frac{2}{n}}\nonumber \\
&+ C_2 \left\{C_3+\left(\frac{k}{\vol(\Gamma)}\right)^\frac{2}{n-1}\right\} \left(\frac{I(\Omega)}{I_0(\Omega)} \right)^{1+\frac{2}{n-1}} \left[ \frac{\vol(\Omega)}{\vol(\Gamma)}+\beta\right].
\end{align}
Rearranging terms in above inequality, we have
\begin{multline}
\lambda_{W,k}^{\beta}(\Om)
\leqslant C(\Omega,\beta)
+ C_1 \left(\frac{\vol(\Omega)}{\vol(\Gamma)}\right)^{1-\frac{2}{n}} \left(\frac{k}{\vol(\Gamma)}\right)^{\frac{2}{n}}\\
+ C_2  \left(\frac{I(\Omega)}{I_0(\Omega)} \right)^{1+\frac{2}{n-1}} \left[ \frac{\vol(\Omega)}{\vol(\Gamma)}+\beta\right]\left(\frac{k}{\vol(\Gamma)}\right)^\frac{2}{n-1},
\end{multline}
where 
\begin{multline}
C(\Omega,\beta):=\frac{C_1}{\left(C\rho_{n-1}r_-(\Gamma)^{n-1}\right )^{\frac{2}{n}}} \left(\frac{\vol(\Omega)}{\vol(\Gamma)}\right)^{1-\frac{2}{n}}\\ + \frac{C_2}{\left(C\rho_{n-1}r_-(\Gamma)^{n-1}\right )^{\frac{2}{n-1}}}\left(\frac{I(\Omega)}{I_0(\Omega)} \right)^{1+\frac{2}{n-1}} \left[ \frac{\vol(\Omega)}{\vol(\Gamma)}+\beta\right].
\end{multline}
The result follows applying Theorem \ref{mainanniv} when $k\geqslant K_0$.
\end{proof}

%\section{Proof of Theorem }
%\begin{thm}\label{eucl}
%Let $n\geqslant 2$ and $\Omega$ be a bounded domain with smooth boundary $\Gamma$,  in $\R^n$. Then, for every $k\leqslant 1$, we have  
% \begin{align*}
%\lambda_{W,k}^{\beta}(\Om)\leqslant&
% \zeta_1(n) k^{\frac{2}{n}} \frac{I(\Omega)^{1+\frac{1}{n-1}}}{\vol(\Gamma)^\frac{1}{n-1}}+ \nonumber \\
%& \zeta_2(n) \left(\frac{k}{\vol(\Gamma)}\right)^\frac{2}{n-1} {I(\Omega)}^{1+\frac{2}{n-1}} \left[ \frac{\vol(\Gamma)^\frac{1}{n-1}}{I(\Omega)^{1+\frac{1}{n-1}}}+\beta\right],
%\end{align*}
%where $\zeta_1(n):= 2^{6+10n+\frac{2}{n}}\omega_n^{\frac{2}{n}}$ and
% $\zeta_2(n):=\frac{ 2^{3n+23+^\frac{2}{n-1}} \rho_{n-1}^\frac{2}{n-1}}{\left(n\omega_n^\frac{1}{n} \right)^{1+\frac{2}{n-1}}}
%$.
%\end{thm}
\section{Proof of main theorems}\label{sec1701}
\begin{proof}[Proof of Theorem \ref{eucl}]
We have in the Euclidean case:
$$r_-(\Gamma)=+\infty,\quad C=2, \quad I_0(\Omega)=I_0(\R^n)=n\omega_n^\frac{1}{n},\quad N_r\leqslant 32^n, \forall~ r>0.$$
 Applying Theorem \ref{t1501}, we get for every $k\geqslant 1$
\begin{multline}
\lambda_{W,k}^{\beta}(\Om)
\leqslant  \zeta_1(n)  \left(\frac{\vol(\Omega)}{\vol(\Gamma)}\right)^{1-\frac{2}{n}} \left(\frac{k}{\vol(\Gamma)}\right)^{\frac{2}{n}}\\
+ \zeta_2(n) \left(\frac{I(\Omega)}{I_0(\Omega)} \right)^{1+\frac{2}{n-1}} \left[ \frac{\vol(\Omega)}{\vol(\Gamma)}+\beta\right]\left(\frac{k}{\vol(\Gamma)}\right)^\frac{2}{n-1},
\end{multline}
where $\zeta_1(n):= 2^{10(n+1)}\omega_n^{\frac{2}{n}}$ and
 $\zeta_2(n):= 2^{10(n+3)}\rho_{n-1}^\frac{2}{n-1}
$. The result follows replacing $I_0(\Omega)$ by $n\omega^{\frac{1}{n}}$.
\end{proof}
\begin{proof}[Proof of Corollary \ref{eucl20052020}]
From Theorem \ref{eucl}, one has 
\begin{align}
\lambda_{W,k}^{\beta}(\Om)
&\leqslant
\zeta_1(n)  \left(\frac{\vol(\Omega)}{\vol(\Gamma)}\right)^{1-\frac{2}{n}} \left(\frac{k}{\vol(\Gamma)}\right)^{\frac{2}{n}}\nonumber\\
&~ + \zeta_2(n) I(\Omega)^{1+\frac{2}{n-1}} \left[ \frac{\vol(\Omega)}{\vol(\Gamma)}+\beta\right]\left(\frac{k}{\vol(\Gamma)}\right)^\frac{2}{n-1}\label{ineq20052020}\\
&=
 \Bigg\{
 \zeta_1(n)  \left(\frac{\vol(\Omega)}{\vol(\Gamma)}\right)^{1-\frac{2}{n}} \left(\frac{\vol(\Gamma)}{k}\right)^{\frac{2}{n(n-1)}}\nonumber\\
&~+ \zeta_2(n) I(\Omega)^{1+\frac{2}{n-1}} \left[ \frac{\vol(\Omega)}{\vol(\Gamma)}+\beta\right]
\Bigg\}
\left(\frac{k}{\vol(\Gamma)}\right)^{\frac{2}{n-1}}.\nonumber
\end{align}
\begin{enumerate}
\item If $\zeta_1(n)  \left(\frac{\vol(\Omega)}{\vol(\Gamma)}\right)^{1-\frac{2}{n}} \left(\frac{\vol(\Gamma)}{k}\right)^{\frac{2}{n(n-1)}}<1$, then 
$$\lambda_{W,k}^{\beta}(\Om)
< C_2(\Omega, \beta)\left(\frac{k}{\vol(\Gamma)}\right)^{\frac{2}{n-1}}.$$
\item Otherwise, $\zeta_1(n)  \left(\frac{\vol(\Omega)}{\vol(\Gamma)}\right)^{1-\frac{2}{n}} \left(\frac{\vol(\Gamma)}{k}\right)^{\frac{2}{n(n-1)}}\geqslant 1$. That is,
$$\frac{k}{\vol(\Gamma)}\leqslant \left[\zeta_1(n)  \left(\frac{\vol(\Omega)}{\vol(\Gamma)}\right)^{\frac{n-2}{n}} \right]^{\frac{n(n-1)}{2}},$$
\begin{equation}
\begin{cases}
\left(\frac{k}{\vol(\Gamma)}\right)^{\frac{2}{n}}\leqslant \zeta_1^{n-1}(n)  \left(\frac{\vol(\Omega)}{\vol(\Gamma)}\right)^{\frac{(n-2)(n-1)}{n}}\\
\left(\frac{k}{\vol(\Gamma)}\right)^{\frac{2}{n-1}}\leqslant \zeta_1^n(n)  \left(\frac{\vol(\Omega)}{\vol(\Gamma)}\right)^{n-2}.
\end{cases}
\end{equation}
Replacing in \eqref{ineq20052020}, we  get 
\begin{align*}
\lambda_{W,k}^{\beta}(\Om)
\leqslant & \zeta_1^n(n)  \left(\frac{\vol(\Omega)}{\vol(\Gamma)}\right)^{n-2}\\
&+ \zeta_1^n(n)  \left(\frac{\vol(\Omega)}{\vol(\Gamma)}\right)^{n-2}\zeta_2(n) I(\Omega)^{1+\frac{2}{n-1}} \left[ \frac{\vol(\Omega)}{\vol(\Gamma)}+\beta\right]\\
&\leqslant \zeta_1^n(n)  \left(\frac{\vol(\Omega)}{\vol(\Gamma)}\right)^{n-2}\left\{1+\zeta_2(n) I(\Omega)^{1+\frac{2}{n-1}} \left[ \frac{\vol(\Omega)}{\vol(\Gamma)}+\beta\right]\right\}\\
&= C_1(\Omega, \beta)
\end{align*}
\end{enumerate}
 In  both cases, one has $\lambda_{W,k}^{\beta}(\Om)
\leqslant C_1(\Omega, \beta)+C_2(\Omega, \beta)\left(\frac{k}{\vol(\Gamma)}\right)^{\frac{2}{n-1}}$.
\end{proof}
\begin{proof}[Proof of Theorem \ref{t16012}]
Let $r>0$, we denote by $\nu(n,-\kappa^2,r)$ (respectively $\nu_\partial(n,-\kappa^2,r)$)  the volume of a ball (respectively a sphere) of radius $r$ in the constant curvature model space $M_{-\kappa^2}^n$.
As a consequence of the relative Bishop-Gromov volume comparison theorem, we have the following volume and area comparisons, for every $r>0$ and $x\in M$:
\begin{equation*}
 \vol (B(x,r))\leqslant \nu(n,-\kappa^2, r)\quad \text{and}\quad \vol (\partial B(x,r))\leqslant \nu_{\partial}(n,-\kappa^2, r).
\end{equation*}
The sphere of radius $r$ in the model space $M_{-\kappa^2}^n$ has area 
$$\nu_\partial(n,-\kappa^2,r)=\rho_{n-1}sn_{-\kappa^2}(r)^{n-1}$$
and the ball of radius $r$ has volume 
$$\nu(n,-\kappa^2,r)=\rho_{n-1}\int_0^r sn_{-\kappa}(t)^{n-1}\mathrm{d} t,$$
where   $sn_\varkappa:\R\longrightarrow\R$ is defined by
\begin{equation*}
sn_\varkappa(t)=
\begin{cases}
\frac{1}{\sqrt{\varkappa}}\sin(\sqrt{\varkappa}t)\quad & \text{if } \varkappa>0\\
t \quad &  \text{if }\varkappa=0\\
\frac{1}{\sqrt{-\varkappa}}\sinh(\sqrt{-\varkappa}t)\quad &  \text{if }\varkappa<0.
\end{cases}
\end{equation*}
 
%\paragraph*{\underline{Case $\kappa>0$}}
\begin{align*}
\rho_{n-1}\int_0^{r}sn_{-\kappa^2}(r)^{n-1}\mathrm{d}t&=\rho_{n-1}\int_0^{r}\left((\frac{1}{\kappa}\sinh(\kappa t)\right)^{n-1}\mathrm{d}t \\
&\leqslant \rho_{n-1} \int_0^{r} \left[te^{\kappa t}\right]^{n-1}\mathrm{d}t\\
&\leqslant \rho_{n-1}e^{r(n-1)\kappa} \int_0^{r} t^{n-1}\mathrm{d}t\\
&\leqslant \omega_nr^ne^{r(n-1)\kappa} 
\end{align*}
and
$$\rho_{n-1} sn_{-\kappa^2}(r)^{n-1}\leqslant e^{r(n-1)\kappa} \rho_{n-1}r^{n-1}.$$
Hence, for every $0<r<1$ and $x\in M$, we have
\begin{equation}
  \vol(B(x,r))<C \omega_nr^n\quad\text{and}\quad \vol(\partial B(x,r))<C \rho_{n-1}r^{n-1}, 
\end{equation}
with $C:= e^{n\kappa}$.

On the other hand, for every $0<r<1$ and $x\in M$, $B(x,r)$ can be covered by $N:=2^{5n}e^{4r(n-1)\kappa t}<2^{5n}e^{4(n-1)\kappa}$ balls of radius $\frac{r}{4}$.
Indeed, take
 $\{B(x_i,\frac{r}{8})\}_{i=1}^N$ a maximal family of disjoint balls with center $x_i\in B(x,r)$. By the maximality assumption, the family $\{B(x_i,\frac{r}{4})\}_{i=1}^N$ covers $B(x,r)$. Let $i_0\in\{1,\ldots,N\}$ such that
 $$\vol \left(B(x_{i_0},\frac{r}{8})\right)=\min_{1\leqslant i\leqslant N}\vol\left(B(x_i,\frac{r}{8})\right).$$
Then one has 
  $$N\vol(B(x_{i_0},\frac{r}{8}))\leqslant \sum_{1\leqslant i\leqslant N}\vol(B(x_{i},\frac{r}{8}))$$
since the balls  $B(x_{i},\frac{r}{8})$ are pairwise disjoint. In addition, $B(x_{i},\frac{r}{8})\subset B(x_{i},r+\frac{r}{8})$  for every $x_i\in B(x,r)$. Hence
$ N\vol(B(x_{i_0},\frac{r}{8}))\leqslant \vol(B(x,\frac{9r}{8}))$,
$$N\leqslant \frac{\vol(B(x,\frac{9r}{8}))}{\vol(B(x_{i_0},\frac{r}{8}))}
\leqslant \frac{\vol(B(x,2r))}{\vol(B(x_{i_0},\frac{r}{8}))}
\leqslant \frac{\vol(B(x_{i_0},4r))}{\vol(B(x_{i_0},\frac{r}{8}))}.$$
Using the Relative volume comparison theorem (Bishop 1964, Gromov 1980), one has 
$$\frac{\vol(B(x_{i_0},4r))}{\vol(B(x_{i_0},\frac{r}{8}))}\leqslant \frac{\nu(n,-\kappa,4r)}{\nu(n,-\kappa,\frac{r}{8})},$$
where $\nu(n,\kappa,r)$ denotes the volume of a ball of radius $r$ in the constant curvature model space $M_\kappa^n$. Then
\begin{align*}
N&\leqslant\frac{\int_0^{4r}\sinh^{n-1}(\kappa t)\mathrm{d}t}{\int_0^{\frac{r}{8}}\sinh^{n-1}(\kappa t)\mathrm{d}t}
\leqslant \frac{\int_0^{4r} \left[(\kappa t)e^{\kappa t}\right]^{n-1}\mathrm{d}t}{\int_0^{\frac{r}{8}}(\kappa t)^{n-1}\mathrm{d}t}\\
&\leqslant \frac{e^{4r(n-1)\kappa} \int_0^{4r} t^{n-1}\mathrm{d}t}{\int_0^{\frac{r}{8}}t^{n-1}\mathrm{d}t}=2^{5n}e^{4r(n-1)\kappa t}<2^{5n}e^{4(n-1)\kappa}.
\end{align*}
 Then applying Theorem \ref{t1501} with $$r_-(\Gamma)=1,\quad C=e^{n\kappa}, \quad N_r=2^{5n}e^{4(n-1)\kappa},$$
we get, for every $k\geqslant 1$,
\begin{multline*}
\lambda_{W,k}^{\beta}(\Om)
\leqslant e^{c_0(n)\kappa}
\Bigg\{
C(\Omega,\beta)\\
+ c_1(n) \left(\frac{\vol(\Omega)}{\vol(\Gamma)}\right)^{1-\frac{2}{n}} \left(\frac{k}{\vol(\Gamma)}\right)^{\frac{2}{n}}\\
+ c_2(n)  \left(\frac{I(\Omega)}{I_0(\Omega)} \right)^{1+\frac{2}{n-1}} \left[ \frac{\vol(\Omega)}{\vol(\Gamma)}+\beta\right]\left(\frac{k}{\vol(\Gamma)}\right)^\frac{2}{n-1}
\Bigg\},
\end{multline*}
where 
\begin{equation*}
C(\Omega,\beta):=c'_1(n) \left(\frac{\vol(\Omega)}{\vol(\Gamma)}\right)^{1-\frac{2}{n}}+ c'_2(n)\left(\frac{I(\Omega)}{I_0(\Omega)} \right)^{1+\frac{2}{n-1}} \left[ \frac{\vol(\Omega)}{\vol(\Gamma)}+\beta\right]
\end{equation*}
and
the constants $c_0(n)$, $c_1(n)$, $c_2(n)$, $c'_1(n)$ and $c'_2(n)$ depend only on $n$.

Following the same arguments as the proof of Corollary \ref{eucl20052020}, we have 
for every $k\geqslant 1$, one has
\begin{equation*}
\lambda_{W,k}^{\beta}(\Om)
\leqslant e^{c_0(n)\kappa}
\Bigg\{
\overline{C}_1(\Omega,\beta)+\overline{C}_2(\Omega,\beta)\left(\frac{k}{\vol(\Gamma)}\right)^\frac{2}{n-1}
\Bigg\},
\end{equation*}
where $\overline{C}_2(\Omega,\beta)=1+c_2(n)\left(\frac{I(\Omega)}{I_0(\Omega)} \right)^{1+\frac{2}{n-1}} \left[ \frac{\vol(\Omega)}{\vol(\Gamma)}+\beta\right]$ and\\
$\overline{C}_1(\Omega,\beta)=C(\Omega,\beta)+c_1^n(n)\overline{C}_2(\Omega,\beta).$

%%%%%%%%%%%%
 \begin{enumerate}[label={}]
    
    \item[-] If $\kappa\leqslant 1$, then 
    \begin{equation*}
\lambda_{W,k}^{\beta}(\Om)
\leqslant e^{c_0(n)}
\Bigg\{
\overline{C}_1(\Omega,\beta)+\overline{C}_2(\Omega,\beta)\left(\frac{k}{\vol(\Gamma)}\right)^\frac{2}{n-1}
\Bigg\},
\end{equation*}
which implies \eqref{ineq22052020}.
    \item[-] Otherwise, we assume that  $\ric (M,g) \geqslant -(n-1)\kappa^2 g$ with $\kappa> 1$. Then the Ricci curvature $\ric (M,\tilde{g})$ of the rescaled metric $\tilde{g}:=\kappa^2 g$ is bounded by $-(n-1)\tilde{g}$. We mark with a tilde quantities associated with the metric ${\tilde {g}}$, while those unmarked with such will be still associated with the metric $g$. Then we have
\begin{equation}
\lambda_{W,k}^{\beta}(\tilde{\Om})
\leqslant e^{c_0(n)}
\Bigg\{
\overline{C}_1(\tilde{\Omega},\beta)+\overline{C}_2(\tilde{\Omega},\beta)\left(\frac{k}{\vol(\tilde{\Gamma)}}\right)^\frac{2}{n-1}
\Bigg\}.\label{ineq21052020}
\end{equation}
However $\vol(\tilde{\Omega})=\mathrm{Vol}_{\tilde{g}}(\Om)=\kappa^{n}\vol(\Om)$ and $\vol(\tilde{\Gamma})=\kappa^{n-1}\vol(\Gamma)$.
 Thus,
\begin{align}
\overline{C}_2(\tilde{\Omega},\beta)
&=1+c_2(n)\left(\frac{I(\tilde{\Omega})}{I_0(\tilde{\Omega)}} \right)^{1+\frac{2}{n-1}} \left[ \frac{\vol(\tilde{\Omega)}}{\vol(\tilde{\Gamma)}}+\beta\right]\nonumber\\
&=1+c_2(n)\left(\frac{I(\Omega)}{I_0(\Omega)} \right)^{1+\frac{2}{n-1}} \left[ \kappa\frac{\vol(\Omega)}{\vol(\Gamma)}+\beta\right].\label{ineq21052020a}
\end{align}
Likewise, since
\begin{align*}
C(\tilde{\Omega},\beta)
&=c'_1(n) \left(\kappa\frac{\vol(\Omega)}{\vol(\Gamma)}\right)^{1-\frac{2}{n}}+ c'_2(n)\left(\frac{I(\Omega)}{I_0(\Omega)} \right)^{1+\frac{2}{n-1}} \left[\kappa \frac{\vol(\Omega)}{\vol(\Gamma)}+\beta\right],
\end{align*}
we have
\begin{align}
\overline{C}_1(\tilde{\Omega},\beta)&=C(\tilde{\Omega},\beta)+c_1^n(n)\overline{C}_2(\tilde{\Omega},\beta)\nonumber\\
&=c_1^n(n)+c'_1(n) \left(\kappa\frac{\vol(\Omega)}{\vol(\Gamma)}\right)^{1-\frac{2}{n}}\nonumber\\
&~+\Big(c_1^n(n)c_2(n)+c'_2(n)\Big)\left(\frac{I(\Omega)}{I_0(\Omega)} \right)^{1+\frac{2}{n-1}} \left[ \kappa\frac{\vol(\Omega)}{\vol(\Gamma)}+\beta\right]
.
\label{ineq21052020b}
\end{align}
We set $\overline{\zeta}(n):=\max\{1, c_2(n),  c_1^n(n), c'_1(n),c_1^n(n)c_2(n)+c'_2(n)\}$ so that
\begin{align*}
\overline{C}_2(\tilde{\Omega},\beta)&
\leqslant\overline{\zeta}(n)\left\{1+\left(\frac{I(\Omega)}{I_0(\Omega)} \right)^{1+\frac{2}{n-1}} \left[ \kappa\frac{\vol(\Omega)}{\vol(\Gamma)}+\beta\right]\right\}\text{ and }\\
\overline{C}_1(\tilde{\Omega},\beta)&
\leqslant \overline{\zeta}(n)\left\{ 1+ \left(\kappa\frac{\vol(\Omega)}{\vol(\Gamma)}\right)^{1-\frac{2}{n}}+\left(\frac{I(\Omega)}{I_0(\Omega)} \right)^{1+\frac{2}{n-1}} \left[ \kappa\frac{\vol(\Omega)}{\vol(\Gamma)}+\beta\right]\right\}
.
\end{align*}
In addition, since $\kappa>1$, for all  $u\in \mathfrak{V}_\beta\backslash\{0\}$ we have
 \begin{equation*}
\tilde{R}_\beta(u)=\frac{\kappa\int_\Om{|\nabla u|^2 \mathrm{d}_M+\beta\int_{\Gamma}{|\nabla_\Gamma u|^2 \mathrm{d}_\Gamma}}}{\kappa^2\int_{\Gamma}{u^2 \mathrm{d}_\Gamma}}\geqslant\frac{1}{\kappa^2} {R}_\beta(u).
\end{equation*}
Every orthonormal basis of  a $ k$-dimensional  subspaces $V\in\mathfrak{V}(k) $ of   $ \mathfrak{V}_\beta$ remains orthogonal with the metric $\tilde{g}$, then using the variation characterisation with \eqref{ineq21052020}, \eqref{ineq21052020a} and \eqref{ineq21052020b}, we have
\begin{align}
\lambda_{W,k}^{\beta}(\Om)&\leqslant\kappa^2\lambda_{W,k}^{\beta}(\tilde{\Om})
\leqslant \kappa^2e^{c_0(n)}
\Bigg\{
\overline{C}_1(\Omega,\beta)+\overline{C}_2(\tilde{\Omega},\beta)\left(\frac{k}{\vol(\tilde{\Gamma)}}\right)^\frac{2}{n-1}
\Bigg\}\nonumber\\
&\leqslant
\kappa^2  \zeta(n)\left\{ 1+ \left(\kappa\frac{\vol(\Omega)}{\vol(\Gamma)}\right)^{1-\frac{2}{n}}+\left(\frac{I(\Omega)}{I_0(\Omega)} \right)^{1+\frac{2}{n-1}} \left[ \kappa\frac{\vol(\Omega)}{\vol(\Gamma)}+\beta\right]\right\} \nonumber\\
 &~ +\zeta(n)\Bigg\{1+\left(\frac{I(\Omega)}{I_0(\Omega)} \right)^{1+\frac{2}{n-1}} \left[ \kappa\frac{\vol(\Omega)}{\vol(\Gamma)}+\beta\right]\Bigg\}\left(\frac{k}{\vol(\Gamma)}\right)^\frac{2}{n-1}
,
\end{align}
where $ \zeta(n)=e^{c_0(n)}\overline{\zeta}(n)$ is a dimensional constant.
    %To make the notation precise, a reference to the (a) coordinates should be added.
 \end{enumerate}
%%%%%%%%%%%%%%%%%%
\end{proof}

\begin{proof}[Proof of Theorem \ref{t16011}]
We have for every $r>0$, $sn_{0}(r)=r$, then for every $r>0$ and $x\in M$, one has
\begin{equation*}
 \vol (B(x,r))\leqslant \nu(n,0, r)=\rho_{n-1}\int_0^r  t^{n-1}\mathrm{d}t=\omega_nr^n
\end{equation*}
and
\begin{equation*}
\vol (\partial B(x,r))\leqslant \nu_{\partial}(n,0, r)=\rho_{n-1} (r)^{n-1},\quad \forall~ x\in M.
\end{equation*}
On the other hand, for every $r>0$ and $x\in M$, $B(x,r)$ can be covered by $N:=32^{n}$ balls of radius $\frac{r}{4}$.
As above, we take
 $\{B(x_i,\frac{r}{8})\}_{i=1}^N$ a maximal family of disjoint balls with center $x_i\in B(x,r)$. By the maximality assumption, the family $\{B(x_i,\frac{r}{4})\}_{i=1}^N$ covers $B(x,r)$. Let $i_0\in\{1,\ldots,N\}$ such that
 $$\vol (B(x_{i_0},\frac{r}{8}))=\min_{1\leqslant i\leqslant N}\vol(B(x_i,\frac{r}{8})).$$
  Then, since the balls  $B(x_{i},\frac{r}{8})$ are pairwise disjoint, one has 
  $$N\vol(B(x_{i_0},\frac{r}{8}))\leqslant \sum_{1\leqslant i\leqslant N}\vol(B(x_{i},\frac{r}{8})).$$
In addition, $B(x_{i},\frac{r}{8})\subset B(x_{i},r+\frac{r}{8})$  for every $x_i\in B(x,r)$, so
$$ N\vol(B(x_{i_0},\frac{r}{8}))\leqslant \vol(B(x,\frac{9r}{8})).$$ 
Hence,
$$N\leqslant \frac{\vol(B(x,\frac{9r}{8}))}{\vol(B(x_{i_0},\frac{r}{8}))}
\leqslant \frac{\vol(B(x,2r))}{\vol(B(x_{i_0},\frac{r}{8}))}
\leqslant \frac{\vol(B(x_{i_0},4r))}{\vol(B(x_{i_0},\frac{r}{8}))}.$$
Using the volume comparison theorem (Bishop 1964, Gromov 1980), one has 
$$\frac{\vol(B(x_{i_0},4r))}{\vol(B(x_{i_0},\frac{r}{8}))}\leqslant \frac{\omega_n(4r)^n}{\omega_n(\frac{r}{8})^n}\leqslant 32^n.$$
 Then follows from Theorem \ref{t1501} with $r_-(\Gamma)=+\infty$, $C=2$ and $N_r=32^n$.

\end{proof}

\bibliographystyle{plain}
%  \bibliography{Bibli}

% ------------------------------------------------------------------------
\end{document}